\documentclass[11pt]{amsart}
\usepackage[T1]{fontenc}
\usepackage{lmodern,amsmath,amssymb,mathtools}
\usepackage[a4paper,margin=29mm]{geometry}
\usepackage{microtype}
\usepackage{cite}
\usepackage[colorlinks=true,linkcolor=blue,citecolor=blue,urlcolor=blue]{hyperref}
\hypersetup{pdftitle={Hermitian manifolds with constant holomorphic sectional curvature or real bisectional curvature},pdfauthor={Kai Tang},pdfsubject={Research manuscript},pdfkeywords={Chern curvature, real bisectional curvature, pluriclosed metric}}
\numberwithin{equation}{section}
\newtheorem{theorem}{Theorem}[section]
\newtheorem{proposition}[theorem]{Proposition}
\newtheorem{lemma}[theorem]{Lemma}
\newtheorem{corollary}[theorem]{Corollary}
\newtheorem{conjecture}[theorem]{Conjecture}
\theoremstyle{definition}
\theoremstyle{remark}\newtheorem{remark}[theorem]{Remark}
\newcommand{\ii}{\sqrt{-1}}
\newcommand{\bp}{\bar\partial}
\newcommand{\ph}{\varphi}
\newcommand{\Jac}{\mathcal J}
\newcommand{\tr}{\operatorname{tr}}
\newcommand{\Ric}{\operatorname{Ric}}
\newcommand{\Rea}{\operatorname{Re}}
\newcommand{\eps}{\varepsilon}
\newcommand{\dV}{\mathrm dV_h}
\newcommand{\inner}[2]{\langle #1,#2\rangle}
\newcommand{\CH}{\mathbb{CH}}
\allowdisplaybreaks[2]
\title[Constant curvature on Hermitian manifolds]{Hermitian manifolds with constant holomorphic sectional curvature or real bisectional curvature}
\author{Kai Tang}
\address{School of Mathematical Sciences, Zhejiang Normal University, Jinhua, China}
\email{kaitang001@zjnu.edu.cn}
\makeatletter
\@namedef{subjclassname@2020}{\textup{2020} Mathematics Subject Classification}
\makeatother
\subjclass[2020]{Primary 53C55; Secondary 32Q15, 32Q20}

\keywords{Real bisectional curvature, holomorphic sectional curvature, pluriclosed metric}
\thanks{\text{Foundation item:} Supported by Natural Science Foundation of Zhejiang Province (No. LMS26A010006).}

\begin{document}
\begin{abstract}
An old conjecture in Hermitian geometry states that a compact Hermitian manifold with constant holomorphic sectional curvature is K\"ahler when the constant is nonzero and Chern flat when the constant is zero. Yang--Zheng introduced the real bisectional curvature as a generalization of the holomorphic sectional curvature and conjectured that, a compact Hermitian manifold with constant real bisectional curvature has zero constant and is Chern flat. They proved that the constant cannot be positive. In this paper, we first prove their conjecture. For a negative constant, we show that the metric is pluriclosed and use comparison with a negative K\"ahler--Einstein metric to obtain a contradiction. For the zero constant, we combine a torsion identity of Lin--Ren with the Bochner formula of Zhou--Zheng to prove Chern flatness. We also prove that a pluriclosed metric with constant negative holomorphic sectional curvature on a compact K\"ahler manifold is K\"ahler.
\end{abstract}
\maketitle

\section{Introduction}

The holomorphic sectional curvature plays a fundamental role in complex differential geometry. It is well known that a complete K\"ahler manifold with constant holomorphic sectional curvature is a complex space form. Its universal cover is, up to a scaling, the complex projective space, the complex Euclidean space, or the complex hyperbolic space (see \cite{ZhengBook}).

For a Hermitian manifold $(M^n,h)$, let $\nabla$, $T$, and $R$ denote the Chern connection, its torsion, and its curvature, respectively. The holomorphic sectional curvature is defined by
\[
 H_h(v)=\frac{R(v,\bar v,v,\bar v)}{|v|_h^4},
 \qquad 0\ne v\in T^{1,0}M.
\]
In the non-K\"ahler case, the Chern curvature tensor does not have all the K\"ahler symmetries, and the holomorphic sectional curvature does not determine the whole curvature tensor. This leads to the following long-standing conjecture.

\begin{conjecture}\label{conj:HSC}
Let $(M^n,h)$ be a compact Hermitian manifold with $n\ge2$. Suppose that $H_h\equiv c$ for a constant $c$. If $c\ne0$, then $h$ is K\"ahler. If $c=0$, then $h$ is Chern flat.
\end{conjecture}

Here Chern flatness means that Chern curvature tensor vanishes. Compact Chern-flat manifolds were studied by Boothby \cite{Boothby} in 1958, who proved that their universal covers are complex Lie groups equipped with left-invariant Hermitian metrics. Such a metric may not be K\"ahler. the standard Hermitian metric on the Iwasawa manifold is an example.

For $n=2$, Conjecture~\ref{conj:HSC} is known. The nonpositive case was established by Balas--Gauduchon \cite{BG}; see also Balas \cite{Balas}. The general case follows from the classification of compact self-dual Hermitian surfaces by Apostolov--Davidov--Mu\v skarov \cite{ADM}. For $n\geq3$, the conjecture is still open. Tang \cite{TangKL} verified the conjecture for Chern K\"ahler-like metrics (namely $R$ obeys all K\"{a}hler symmetries). Chen--Chen--Nie \cite{CCN} proved the nonpositive case for locally conformally K\"ahler metrics, and Huang--Wan \cite{HW} removed the sign restriction in this setting. Rao--Zheng \cite{RZ} established the conjecture for Bismut K\"ahler-like metrics. Further results on the conjecture can be found in the work of Li--Zheng \cite{LZ} on complex nilmanifolds and in recent work on Bismut torsion-parallel metrics by Chen--Zheng \cite{CZ}, Wang--Zheng \cite{WZ}, and Wang \cite{Wang}. In particular, \cite{WZ} treats balanced Bismut torsion-parallel fourfolds, whereas \cite{Wang} treats the nonzero constant case in arbitrary dimension. See Zheng \cite{ZhengSurvey} for a recent survey of the conjecture.

Broder--Tang \cite{BT} proved that a pluriclosed metric with vanishing holomorphic sectional curvature on a compact K\"ahler manifold is K\"ahler flat. They also proved K\"ahler flatness for Hermitian metrics with vanishing real bisectional curvature on compact complex manifolds in the Fujiki class~$\mathcal C$. In the first statement, the K\"ahler background and the pluriclosed metric need not coincide.

Motivated by extending the Wu--Yau theorem \cite{TY,WY} to Hermitian metrics, Yang--Zheng \cite{YZ} introduced the real bisectional curvature. For a unitary frame $e=(e_1,\ldots,e_n)$ and $a=(a_1,\ldots,a_n)\in\mathbb R^n_{\ge0}\setminus\{0\}$, it is defined by
\begin{equation}\label{eq:Bdef}
 B_h(e,a)=\frac{\sum_{i,j}R_{i\bar i j\bar j}a_i a_j}{\sum_i a_i^2}.
\end{equation}
For K\"ahler metrics, the sign conditions on $B_h$ and $H_h$ are equivalent. For general Hermitian metrics, the corresponding condition on $B_h$ is stronger. Constancy in \eqref{eq:Bdef} requires the same value at every point for every unitary frame.

For constant real bisectional curvature, Yang--Zheng proved the following result.
\begin{proposition}[Yang--Zheng {\cite[Theorem 1.4]{YZ}}]\label{prop:YZintro}
Let $(M^n,h)$ be a compact Hermitian manifold with $n\ge2$ and $B_h\equiv c$. Then $c\le0$. If $c=0$, then $h$ is balanced, its first, second, and third Chern--Ricci tensors vanish, and
\begin{equation}\label{eq:pairzero}
 R_{i\bar j k\bar\ell}+R_{k\bar\ell i\bar j}=0
\end{equation}
in every unitary frame.
\end{proposition}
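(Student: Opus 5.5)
The plan is to read off pointwise curvature identities from the constancy of $B_h$ by choosing special weights $a$ and unitary frames, and then integrate over $M$. First fix a point and a unitary frame. Taking $a=e_i$ (a single nonzero weight) in \eqref{eq:Bdef} gives $R_{i\bar i i\bar i}=c$, so $H_h\equiv c$. Taking $a=e_i+e_j$ with $i\ne j$ gives $2c+R_{i\bar i j\bar j}+R_{j\bar j i\bar i}=2c$, that is,
\begin{equation*}
R_{i\bar i j\bar j}+R_{j\bar j i\bar i}=0\qquad (i\ne j).
\end{equation*}
The general formula for $B_h(e,a)$ is then automatically equal to $c$, so these two identities, required in every unitary frame, are exactly equivalent to $B_h\equiv c$. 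Next I polarize them over all unitary frames. I replace $e_i,e_j$ by unitary combinations such as $(e_i+e^{\ii\theta}e_j)/\sqrt2$ together with a complementary vector, and I use that $H_h\equiv c$ forces the symmetrized curvature $\sum_{\sigma} R_{\sigma(i)\overline{\sigma(j)}\sigma(k)\overline{\sigma(\ell)}}$ to equal $2c(h_{i\bar j}h_{k\bar\ell}+h_{i\bar\ell}h_{k\bar j})$. The target of this step is the tensor identity
\begin{equation*}
R_{i\bar j k\bar\ell}+R_{k\bar\ell i\bar j}=c\,(h_{i\bar j}h_{k\bar\ell}+h_{i\bar\ell}h_{k\bar j})
\end{equation*}
for all indices. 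For $c=0$ this is \eqref{eq:pairzero}. I expect this polarization to be routine linear algebra, but it has to be done carefully.

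Contracting this identity gives the Ricci statements. Tracing in $k=\ell$ yields $\Ric^{(1)}_{i\bar j}+\Ric^{(2)}_{i\bar j}=(n+1)c\,h_{i\bar j}$. Tracing in $j=k$ relates $\Ric^{(3)}$ and $\Ric^{(4)}$ in the same way. Taking the full trace gives $2s=n(n+1)c$, where $s$ is the Chern scalar curvature. Now I compare with the torsion. The standard identity $R_{k\bar\ell i\bar j}-R_{i\bar\ell k\bar j}=$ (terms in $\nabla T$) lets me rewrite the difference of the Ricci contractions through the divergence of the torsion $1$-form $\eta$ together with a term in $|T|^2$. Integrating against $\omega^n$ and using the Stokes-type formula for the Gauduchon-free quantity $\int(s-\hat s)\,\omega^n=\int |\eta|^2$ (up to normalization) produces an identity of the form
\begin{equation*}
\int_M\bigl(-\alpha\, c+\beta|\eta|^2+\gamma|T|^2\bigr)\omega^n=0,\qquad \alpha,\beta,\gamma>0.
\end{equation*}
This forces $c\le 0$. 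When $c=0$ it forces $\eta=0$, so $h$ is balanced, and $T$ is then constrained as well.

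For $c=0$ I then return to the traced identities. Balancedness makes $\Ric^{(1)}=\Ric^{(2)}$ up to a divergence of $\eta$, hence both vanish, since their sum is zero. Balancedness also makes $\Ric^{(3)}$ agree with $\Ric^{(2)}$ modulo $\nabla\eta$, so $\Ric^{(3)}=0$. The main obstacle is the integral identity. It requires choosing the right combination of the traced identities, so that the boundary-type terms are exact on $M$ without assuming Gauduchon, and so that the signs of the torsion contributions line up. If the direct route fails, I would first pass to the Gauduchon metric in the conformal class and track how $B_h$ transforms, although constancy is not conformally invariant. So I would prefer to find the divergence form directly, using $\partial\omega^{n-1}$.
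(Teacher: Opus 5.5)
Your overall strategy (polarize, contract to the two scalar curvatures, then integrate against $|\eta|^2$) is the right one. The paper quotes this result from Yang--Zheng and reproduces only the polarization step, in Lemma~\ref{lem:polar}.

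\textbf{The polarized identity you aim for is wrong.} Your target $R_{i\bar jk\bar\ell}+R_{k\bar\ell i\bar j}=c(h_{i\bar j}h_{k\bar\ell}+h_{i\bar\ell}h_{k\bar j})$ is the identity of a K\"ahler space form. It contradicts your own first step:
\begin{itemize}
\item with $j=i$, $\ell=k$, $i\ne k$ it gives $R_{i\bar ik\bar k}+R_{k\bar ki\bar i}=c$, whereas $a=e_i+e_k$ gives $0$;
\item your trace $2s=n(n+1)c$ contradicts $s=nc$, which follows from $a=(1,\ldots,1)$.
\end{itemize}
Polarizing $H_h\equiv c$ cannot fix this. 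It only controls the sum of the two pair-symmetrized tensors $R_{i\bar jk\bar\ell}+R_{k\bar\ell i\bar j}$ and $R_{k\bar ji\bar\ell}+R_{i\bar\ell k\bar j}$, so it cannot separate them.

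The correct identity is \eqref{eq:polar}, $R_{i\bar jk\bar\ell}+R_{k\bar\ell i\bar j}=2c\,\delta_{i\ell}\delta_{kj}$. Your two pointwise conditions say that $\sum_{i,j}R_{i\bar ij\bar j}v_iv_j=c\sum_iv_i^2$ for all real $v$, in every unitary frame. By the spectral theorem this is $\mathcal Q(\xi)=c\tr(\xi^2)$ for every Hermitian matrix $\xi$, and polarizing this quadratic form gives \eqref{eq:polar}. Its contractions are
\begin{itemize}
\item $\Ric^{(1)}+\Ric^{(2)}=2c\,\omega$ and $\Ric^{(3)}+\Ric^{(4)}=2nc\,\omega$;
\item $s=nc$ and $\hat s=\sum_{i,k}R_{i\bar kk\bar i}=n^2c$.
\end{itemize}
Your identity would instead give $s=\hat s$, and then the integral step would say nothing about $c$.

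\textbf{The integral identity you posit is also wrong.} As written, $\int_M(-\alpha c+\beta|\eta|^2+\gamma|T|^2)\,\omega^n=0$ with positive coefficients would force $c\ge0$, not $c\le0$. Moreover, no identity with $\gamma>0$ can hold. The Iwasawa manifold is Chern flat, hence has $B_h\equiv0$, and it is balanced with $T\ne0$.

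The correct argument has no torsion-norm term and needs no Gauduchon normalization:
\begin{itemize}
\item Tracing \eqref{eq:Riceta} gives $\hat s-s=\tr_h(\ii\bp\eta)$.
\item Apply Stokes to the global $(n,n-1)$-form $\eta\wedge\omega^{n-1}$, using $\bp\omega^{n-1}=-\bar\eta\wedge\omega^{n-1}$ from \eqref{eq:dw}. This gives $\int_M(s-\hat s)\,\dV=\int_M|\eta|^2\,\dV$.
\item Substituting $s=nc$ and $\hat s=n^2c$ gives $-n(n-1)c\,\mathrm{Vol}(M)=\int_M|\eta|^2\,\dV$. Hence $c\le0$, and $\eta=0$ when $c=0$.
\end{itemize}

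\textbf{Your Ricci step misstates the relations.} In general, $\Ric^{(1)}-\Ric^{(2)}$ and $\Ric^{(3)}-\Ric^{(2)}$ involve the divergence $\sum_rT^j_{ri,\bar r}$ of the full torsion, not only $\nabla\eta$. Use \eqref{eq:Riceta} instead. When $\eta=0$ it gives $\Ric^{(3)}=\Ric^{(4)}=\Ric^{(1)}$, so $\Ric^{(3)}+\Ric^{(4)}=0$ forces $\Ric^{(1)}=0$. Then $\Ric^{(1)}+\Ric^{(2)}=0$ forces $\Ric^{(2)}=0$.
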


Yang--Zheng proposed the following conjecture \cite{YZ} (also see \cite[Conjecture 2]{ZZ}).
\begin{conjecture}[Yang--Zheng]\label{conj:RBC}
Let $(M^n,h)$ be a compact Hermitian manifold with constant real bisectional curvature $B_h\equiv c$. Then $c=0$ and $h$ is Chern flat.
\end{conjecture}

Zhou--Zheng \cite{ZZ} proved Chern flatness in the zero case when $n=3$. Their proof uses the Bianchi identities, a Bochner formula for the torsion, and a special unitary frame available on balanced threefolds. Our first main result proves Conjecture~\ref{conj:RBC}.

\begin{theorem}\label{thm:RBC}
Let $(M^n,h)$ be a compact Hermitian manifold of complex dimension $n\ge2$. If $B_h\equiv c$ for a constant $c$, then
\[
 c=0,\qquad R\equiv0,\qquad \nabla T\equiv0.
\]
\end{theorem}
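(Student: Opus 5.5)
The plan is to turn the pointwise hypothesis into an algebraic identity on the Chern curvature tensor, and then split into the cases $c<0$ and $c=0$; Proposition~\ref{prop:YZintro} already excludes $c>0$. Fix a point and a unitary frame. Taking $a=e_i$ in \eqref{eq:Bdef} gives $R_{i\bar i i\bar i}=c$. Taking $a=e_i+e_j$ with $i\ne j$ then gives $R_{i\bar i j\bar j}+R_{j\bar j i\bar i}=0$. Since the hypothesis holds in every unitary frame, I will polarize by rotating the frame in complex $2$-planes and general $U(n)$ directions. This should yield a full tensor identity, namely that the symmetrization
\[
\tfrac12\bigl(R_{i\bar j k\bar\ell}+R_{k\bar\ell i\bar j}\bigr)
\]
minus its Hermitian-symmetric part equals $\tfrac c2(\delta_{ij}\delta_{k\ell}+\delta_{i\ell}\delta_{kj})$-type terms, with the $c=0$ case reducing to \eqref{eq:pairzero}. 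Contracting this identity relates the first, second and third Chern--Ricci forms, and through the first Bianchi identity relates them to $\nabla T$ and to quadratic torsion terms.

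\emph{Negative case.} Suppose $c<0$. First I will show that $h$ is pluriclosed. Written in a unitary frame, $\ii\partial\bp\omega$ is a combination of the antisymmetrizations $R_{i\bar j k\bar\ell}-R_{k\bar j i\bar\ell}$ (Bianchi), of $\nabla T$, and of $T\ast\bar T$. The polarized identity should force the $(2,2)$-form built from these to vanish, with $c$ dropping out after full skew-symmetrization. Next, the contracted identity should give $\Ric^{(1)}$, or a Chern--Ricci form cohomologous to it, bounded above by a negative multiple of $\omega$ modulo torsion terms. From this I expect to get $K_M$ ample, since $H_h<0$ for a Hermitian metric already gives this kind of conclusion. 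Aubin--Yau then provides a Kähler--Einstein metric $\omega_{KE}$ with $\Ric(\omega_{KE})=-\omega_{KE}$.

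I will then compare $\omega$ with $\omega_{KE}$. I will use a Schwarz/Chern--Lu type inequality for $\tr_{\omega}\omega_{KE}$ (or for $\log(\omega_{KE}^n/\omega^n)$), in which the curvature of $h$ enters only through $B_h\equiv c$; this is exactly the quantity Yang--Zheng designed for the Wu--Yau argument. Integrating against $\omega^{n-1}$, and using $\partial\bp\omega=0$ to kill the torsion terms from integration by parts, should give an inequality that conflicts with the sharp equality forced by constancy. One route is to compare $\int\omega\wedge\omega_{KE}^{n-1}$ computed two ways, giving strict inequality unless the torsion vanishes. Then $h$ would be Kähler with $B_h\equiv c<0$. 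But for Kähler metrics $R_{i\bar i j\bar j}=R_{j\bar j i\bar i}$, so the identity above gives $R_{i\bar i j\bar j}=0$ for $i\ne j$. This is incompatible with constant holomorphic sectional curvature $c\ne0$, which requires $R_{i\bar i j\bar j}=c/2$. That contradiction shows $c=0$.

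\emph{Zero case.} Now $c=0$. By Proposition~\ref{prop:YZintro}, $h$ is balanced, all Chern--Ricci tensors vanish, and \eqref{eq:pairzero} holds. I will feed \eqref{eq:pairzero} and the Bianchi identities into the Lin--Ren torsion identity, which expresses a contraction of $\bp$-derivatives of $T$ in terms of curvature, to identify the curvature terms appearing in the Zhou--Zheng Bochner formula for $|T|^2$. The goal is an integrated identity of the form
\[
\int_M\bigl(|\nabla T|^2+Q(T)\bigr)\dV=0,
\]
with the balanced condition removing the boundary/divergence terms and $Q\ge0$ after using \eqref{eq:pairzero}. This forces $\nabla T\equiv0$. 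Once $\nabla T=0$, the first Bianchi identity expresses the skew part of $R$ purely by $T\ast\bar T$. Combining this with \eqref{eq:pairzero} and the vanishing Ricci tensors, I expect an algebraic argument showing $R\equiv0$; the intermediate step is that $|R|^2$ equals minus a sum of squares of torsion contractions.

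The main obstacle is the zero case: identifying exactly which curvature terms of the Zhou--Zheng Bochner formula are controlled by Lin--Ren plus \eqref{eq:pairzero}, so that the remaining quadratic form $Q$ is nonnegative in every dimension. Zhou--Zheng avoided this in dimension $3$ only by a special frame. I would first try writing everything in terms of the symmetric part of $R$, which vanishes by \eqref{eq:pairzero}, and the Bianchi defect. A secondary difficulty is the pluriclosedness computation for $c<0$.
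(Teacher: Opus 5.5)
Both halves of your plan have genuine gaps.

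\textbf{Negative case.} The polarized identity is $R_{i\bar jk\bar\ell}+R_{k\bar\ell i\bar j}=2c\,\delta_{i\ell}\delta_{kj}$; there is no $\delta_{ij}\delta_{k\ell}$ term. Used pointwise, it does not kill $\ii\partial\bp\omega$. The four-term combination $R_{i\bar jk\bar\ell}+R_{k\bar\ell i\bar j}-R_{i\bar\ell k\bar j}-R_{k\bar j i\bar\ell}$ equals $2c(\delta_{i\ell}\delta_{kj}-\delta_{ij}\delta_{k\ell})$, so $c$ does not drop out, and all you get is $\ii\partial\bp\omega=c\omega^2+\Sigma$ (Lemma~\ref{lem:ddbar}).

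Pluriclosedness needs a differential argument instead. Contract \eqref{eq:polar} to $\Ric^{(3)}+\Ric^{(4)}=2nc\,\omega$. Then \eqref{eq:Riceta} gives $nc\,\omega=\Ric_h^{(1)}+\frac{\ii}{2}(\bp\eta-\partial\bar\eta)$, and applying $\partial\bp$ yields $\partial\bp\omega=0$.

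You also never show that $M$ is K\"ahler, which Aubin--Yau requires. Ampleness of $K_M$ from negative holomorphic sectional curvature is known for K\"ahler metrics. It is not available for an arbitrary Hermitian metric on a manifold not known to be K\"ahler. The paper uses pluriclosedness only at this point, feeding it into Lee--Streets (pluriclosed plus negative real bisectional curvature gives $M$ K\"ahler with $K_M$ ample).

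Finally, the comparison must run in the direction $u=\tr_{\omega_g}\omega_h$, i.e.\ $\mathrm{id}:(M,g)\to(M,h)$, so that $h$ is the target. Then the Chern--Lu curvature term is exactly $c\sum_i\lambda_i^2$, and the maximum principle gives $u\le n$. Next, $\Delta_h\log(\omega_g^n/\omega_h^n)=(-c)(\tr_h g-n)\ge0$, using $s_h=nc$, forces $h=g$. No integration by parts against $\omega^{n-1}$ is involved. Your closing contradiction for a K\"ahler metric is fine.

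\textbf{Zero case.} This is missing its key idea, at exactly the step you flag as the obstacle. Lin--Ren is not a formula for $\bp$-derivatives of $T$ in terms of curvature. It is the pointwise algebraic identity \eqref{eq:LR}, valid for any trace-free torsion. It writes $\sum_a|J^a|^2\,\dV$, the norm of the Jacobi tensor $\Jac(T)$, as a wedge expression $Q_n$ in $\Sigma,\partial\omega,\bp\omega,\omega$.

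With $c=0$ one has $\ii\partial\bp\omega=\Sigma$. So for $n\ge4$, $Q_n=\ii\partial\bp\bigl(\Sigma\wedge\omega^{n-3}/(n-3)!\bigr)$ is exact, and Stokes gives $\Jac(T)=0$. For $n\le3$ this already follows from $\eta=0$.

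Moreover, the remainder in the Zhou--Zheng Bochner formula is not a quadratic form $Q(T)$ you could show is nonnegative. It is the term $2\Rea\sum T^r_{is}T^j_{rk,\bar s}\overline{T^j_{ik}}$, which is linear in $\nabla''T$ and has no sign. The point is that, by \eqref{eq:divT} and the symmetry \eqref{eq:symT}, it equals $\Rea\sum\Jac^j_{isk,\bar s}\overline{T^j_{ik}}$. It therefore vanishes once $\Jac(T)=0$, leaving $\Delta_h|T|^2=|\nabla'T|^2+|\nabla''T|^2$, and the maximum principle gives $\nabla T=0$.

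Last, the Chern first Bianchi identity in this convention has no $T\ast\bar T$ term. Combined with \eqref{eq:pairzero} it reads $2R_{i\bar jk\bar\ell}=-T^\ell_{ik,\bar j}-\overline{T^k_{j\ell,\bar i}}$, so $R=0$ is immediate from $\nabla''T=0$.
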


We first consider the case $c<0$. A contraction of the polarized curvature identity shows that $h$ is pluriclosed. It follows from Lee--Streets \cite{LS} that $M$ is K\"ahler and $K_M$ is ample. We may therefore choose a negative K\"ahler--Einstein metric $g$. Applying the Bochner formula to $\mathrm{id}:(M,g)\to(M,h)$ and then the maximum principle to $\log(\omega_g^n/\omega_h^n)$, we obtain $h=g$. This contradicts the K\"ahler curvature symmetries when $n\ge2$. The comparison is motivated by the proof of Broder--Tang \cite[Theorem 1.3]{BT}, where the Bochner formula is applied with a Ricci-flat K\"ahler background.
When $c=0$, the metric is balanced by Yang--Zheng, and the curvature identity gives
\[
 \ii\partial\bp\omega_h=\sum_a\tau^a\wedge\bar\tau^a,
\]
where $\tau^a$ are the torsion two-forms. Combining the pointwise identity of Lin--Ren \cite{LR} with Stokes' theorem, we deduce that the torsion satisfies the Jacobi identity. We then express the remaining term in the Bochner formula of Zhou--Zheng as a contraction of $\nabla''\Jac(T)$ with $T$. This term vanishes, and the maximum principle gives $\nabla T=0$. The first Bianchi identity then implies that $R=0$.

The comparison argument also applies to constant holomorphic sectional curvature when the metric is pluriclosed. Our second main result is the following.
\begin{theorem}\label{thm:HSC}
Let $X$ be a compact complex manifold of complex dimension $n\ge2$ which admits a K\"ahler metric. Suppose that $h$ is a pluriclosed Hermitian metric with $H_h\equiv c<0$. Then $h$ is K\"ahler. Consequently, $(X,h)$ is a compact complex hyperbolic space form.
\end{theorem}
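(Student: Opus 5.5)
The plan is to compare $h$ with a negative Kähler--Einstein metric, following the comparison argument sketched above for Theorem~\ref{thm:RBC}. Normalize $c=-\frac{n+1}{2}\cdot 2=-(n+1)$ after scaling, or simply keep $c<0$ throughout.

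\textbf{Step 1: ampleness of $K_X$.} Since $X$ is Kähler, $h$ is pluriclosed, and $H_h\equiv c<0$, the result of Lee--Streets \cite{LS} shows that $K_X$ is ample. Their result gives ampleness for pluriclosed metrics with negative holomorphic sectional curvature on Kähler manifolds. By Aubin--Yau there is then a Kähler--Einstein metric $g$ with $\Ric(\omega_g)=-\lambda\omega_g$. We choose $\lambda$ so that the constant-HSC model has $H\equiv c$, namely $\lambda=\frac{(n+1)|c|}{2}$.

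\textbf{Step 2: Bochner inequality.} Apply the Chern--Lu/Bochner formula to $\mathrm{id}:(X,g)\to(X,h)$ and also in the reverse direction. The key tool is the Royden-type trick. For $u=\tr_g\omega_h$, the target curvature term is
\[
\sum R^h_{i\bar i j\bar j}\,\lambda_i\lambda_j \le \frac{c}{2}\Bigl(\bigl(\textstyle\sum\lambda_i\bigr)^2+\sum\lambda_i^2\Bigr),
\]
with $\lambda_i$ the eigenvalues. For Chern curvature this is \emph{not} directly available, because $R^h$ lacks Kähler symmetries. Here pluriclosedness enters: for pluriclosed $h$, the symmetrized curvature $\hat R$ (the part seen by $H_h$) differs from $R$ by torsion terms whose sign can be controlled after integration or at a maximum point. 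I will instead work with the volume ratio $f=\log(\omega_g^n/\omega_h^n)$. We have
\[
\ii\partial\bp f=\Ric(\omega_h)-\Ric(\omega_g)=\Ric^{(1)}(h)+\lambda\omega_g.
\]
Using $\ii\partial\bp\omega_h=0$, express $\tr_h\Ric^{(1)}(h)$ through the Chern scalar curvature plus torsion terms. Then use $H_h\equiv c$ to compute the averaged scalar curvature: averaging $H_h$ over the unit sphere gives $\frac{1}{n(n+1)}\bigl(s+\hat s\bigr)=c/2$, where $\hat s$ is the second scalar curvature. Pluriclosedness gives the relation $s-\hat s=$ a torsion norm term $\ge0$ up to a divergence.

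\textbf{Step 3: maximum principle.} At the maximum of $f$ (or using $\tr_g\omega_h$ and $\tr_h\omega_g$ via the AM--GM inequality), we combine Step 2 with the Einstein equation. The goal is $f\le0$ and $f\ge0$, hence $\omega_h^n=\omega_g^n$. The equality case then forces the torsion terms to vanish, so $T=0$ and $h$ is Kähler. Once $h$ is Kähler with constant negative HSC, the classical result gives that $(X,h)$ is a compact quotient of $\CH^n$.

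\textbf{Main obstacle.} The hard part is Step 2: controlling the non-Kähler curvature components, that is, relating $R^h_{i\bar i j\bar j}$, which enters Chern--Lu, to the holomorphic sectional curvature. In the Kähler case this polarization is standard; here it picks up torsion terms. I would first try a pointwise computation in a unitary frame at the extremal point, simultaneously diagonalizing $g$. I would use pluriclosedness, $\partial\bp\omega_h=0$, to show that the antisymmetric parts of $R$ contribute with a favorable sign, namely nonpositively as a $|T|^2$ term. If that fails, the fallback is to integrate against $\omega_h^{n-1}$, where pluriclosedness makes the torsion boundary terms vanish by Stokes.
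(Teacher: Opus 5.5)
Your outline matches the paper's: a K\"ahler--Einstein $g$ with $\Ric_g=a\omega_g$, $a=\frac{n+1}{2}c$, then Chern--Lu for $u=\tr_g h$, then the volume ratio $F=\log(\omega_g^n/\omega_h^n)$ (your $f$). However, the step you call the main obstacle is never resolved, and the mechanism you propose for it has the wrong sign. Polarize $H_h\equiv c$ and add the pluriclosed identity $R_{i\bar jk\bar\ell}+R_{k\bar\ell i\bar j}-R_{k\bar j i\bar\ell}-R_{i\bar\ell k\bar j}=h_{p\bar q}T^p_{ik}\overline{T^q_{j\ell}}$. This gives the exact formula
\[
 R_{i\bar jk\bar\ell}+R_{k\bar\ell i\bar j}=c(h_{i\bar j}h_{k\bar\ell}+h_{i\bar\ell}h_{k\bar j})+\tfrac{1}{2}h_{p\bar q}T^p_{ik}\overline{T^q_{j\ell}}.
\]
Take coordinates with $g_{i\bar j}=\delta_{ij}$, $h_{i\bar j}=\lambda_i\delta_{ij}$, and $E_i=\lambda_i^{-1/2}\partial_i$. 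Then the Chern--Lu target term is
\[
 Q=\sum_{i,k}\lambda_i\lambda_kR^h(E_i,\bar E_i,E_k,\bar E_k)=\frac c2\Bigl(u^2+\sum_i\lambda_i^2\Bigr)+\frac14\mathcal T,
 \qquad \mathcal T=\sum_{i,k,p}\lambda_p|T^p_{ik}|^2\ge0 .
\]
So the torsion contributes exactly $+\frac14\mathcal T$ to $Q$, and this enters $\Delta_g u$ with the unfavorable sign. Your Royden-type inequality $Q\le\frac c2(u^2+\sum_i\lambda_i^2)$ fails wherever $T\ne0$. Because this is an identity, nothing can be gained at a maximum point.

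Your fallback does not help either. Pluriclosedness does not kill $\int\ii\partial\bp(\cdot)\wedge\omega_h^{n-1}$; that would need $h$ Gauduchon. In any case, an integral identity cannot supply the pointwise bound a maximum principle needs.

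The missing idea is that this term is cancelled by the gradient term in Chern--Lu. Since $g$ is K\"ahler, hence torsion free, the skew part of $A=\nabla\partial\mathrm{id}=\Gamma^h-\Gamma^g$ in its two input indices is exactly $\frac12T$. Write $A=S+\frac12T$ with $S$ symmetric. The two pieces are orthogonal in the induced norm, so $|A|^2=|S|^2+\frac14\mathcal T$, and
\[
 \Delta_gu=|S|^2+au-\frac c2\Bigl(u^2+\sum_i\lambda_i^2\Bigr)\ge\frac{(-c)(n+1)}{2n}\,u(u-n).
\]
This gives $u\le n$, and then $v=\tr_hg\ge n$ from $uv\ge n^2$. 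That bound is exactly what your Step 3 needs.

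Pluriclosedness gives $s_h=\frac{n(n+1)c}{2}+\frac14|T|_h^2$ pointwise, not merely up to a divergence (which would be useless for a maximum principle). Hence $\Delta_hF=(-a)(v-n)+\frac14|T|_h^2\ge0$, so $F$ is constant, $T=0$, $v=n$, and equality in $uv\ge n^2$ gives $h=g$. Without $u\le n$, the maximum of $F$ together with AM--GM only gives $F\le0$, and nothing in your outline yields $F\ge0$.

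Two smaller points. First, with $\hat s=\sum_{i,k}R_{i\bar kk\bar i}$, the sphere average of $H_h$ is $(s+\hat s)/(n(n+1))$, and this equals $c$, not $c/2$. Second, the paper takes ampleness of $K_X$ (negative holomorphic sectional curvature on a K\"ahler manifold) from Broder--Stanfield. It invokes Lee--Streets only under the stronger real bisectional curvature hypothesis.
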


More precisely, if $a=(n+1)c/2$ and $g$ is the K\"ahler--Einstein metric normalized by $\Ric_g^{(1)}=a\omega_g$, then $h=g$. The existence of $g$ follows from Broder--Stanfield \cite{BS} and Aubin--Yau \cite{Aubin,Yau}. Set $u=\tr_g h$ and $v=\tr_h g$. In the Bochner formula for $u$, the pluriclosed curvature identity produces a torsion term which cancels exactly the skew-symmetric part of $\nabla\partial\mathrm{id}$. The remaining terms give
\[
 \Delta_g u\ge\frac{(-c)(n+1)}{2n}u(u-n).
\]
The maximum principle yields $u\le n$, and the inequality $uv\ge n^2$ gives $v\ge n$. On the other hand, for the logarithm of the volume ratio,
\[
 \Delta_h\log\frac{\omega_g^n}{\omega_h^n}
 =(-a)(v-n)+\frac14|T|_h^2\ge0.
\]
A second application of the maximum principle gives $T=0$ and $v=n$. Equality in $uv\ge n^2$ then implies that $h=g$.

Combining Theorem~\ref{thm:HSC} with \cite[Corollary 1.2]{BT}, we obtain:
\begin{corollary}\label{cor:HSC}
Let $X$ be a compact complex manifold admitting a K\"ahler metric, and let $h$ be a pluriclosed Hermitian metric with $H_h\equiv c\le0$. Then $h$ is K\"ahler. If $c<0$, it is a complex hyperbolic space form metric. If $c=0$, it is K\"ahler flat, and $X$ admits a finite \emph{\'etale} cover by a complex torus.
\end{corollary}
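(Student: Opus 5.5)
The corollary splits according to the sign of $c$, and each case reduces to a result already available.

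\emph{Case $c<0$.} Here the hypotheses of Corollary~\ref{cor:HSC} are exactly those of Theorem~\ref{thm:HSC}: $X$ is compact and admits a K\"ahler metric, and $h$ is pluriclosed with $H_h\equiv c<0$. Theorem~\ref{thm:HSC} therefore gives that $h$ is K\"ahler. It also identifies $h$ with the K\"ahler--Einstein metric $g$ normalized by $\Ric_g^{(1)}=\tfrac{(n+1)c}{2}\,\omega_g$.

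It remains to see that $h$ is a space form metric. A K\"ahler metric with constant holomorphic sectional curvature has curvature tensor determined by the polarization formula,
\[
R_{i\bar j k\bar\ell}=\tfrac c2\bigl(h_{i\bar j}h_{k\bar\ell}+h_{i\bar\ell}h_{k\bar j}\bigr).
\]
So $h$ is locally isometric to a rescaled $\CH^n$. Since $X$ is compact, the metric is complete, and the universal cover is $\CH^n$ with the scaled Bergman metric. Hence $(X,h)$ is a compact quotient of complex hyperbolic space.

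\emph{Case $c=0$.} This is the content of \cite[Corollary 1.2]{BT}, quoted in the introduction: a pluriclosed metric with vanishing holomorphic sectional curvature on a compact K\"ahler manifold is K\"ahler flat. Thus $h$ is K\"ahler and $R\equiv0$.

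For the covering statement, I use that a compact flat K\"ahler manifold is a compact flat Riemannian manifold. By Bieberbach's theorem it is finitely covered by a flat real torus. The pulled-back complex structure is parallel for the flat metric, hence translation invariant. The cover is therefore a complex torus, and the covering map is holomorphic and \'etale.

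Combining the two cases gives all the assertions. There is no real obstacle here, since all the analytic work is in Theorem~\ref{thm:HSC} and \cite{BT}. The only point needing a sentence of justification is the Bieberbach step in the flat case, where one must check that the complex structure descends to the torus cover as a translation-invariant one.
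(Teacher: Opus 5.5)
Your proposal is correct and matches the paper's proof: the case $c<0$ is Theorem~\ref{thm:HSC}, the case $c=0$ is \cite[Corollary 1.2]{BT}, and the torus cover comes from the structure theorem for compact flat K\"ahler manifolds. Your Bieberbach argument, that a parallel $J$ on the flat torus cover is translation invariant, just makes that last step explicit.
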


The paper is organized as follows. Section~\ref{sec:prelim} fixes the conventions and proves the torsion identities needed below, including the pointwise formula of Lin--Ren. Section~\ref{sec:RBC} proves Theorem~\ref{thm:RBC} and gives K\"ahler-flatness consequences when additional special metrics exist. Section~\ref{sec:HSC} proves Theorem~\ref{thm:HSC}.

\section{Preliminaries}\label{sec:prelim}

\subsection{The Chern connection and special Hermitian metrics}
Throughout the paper, manifolds are assumed connected and without boundary. All curvature and torsion tensors are those of the Chern connection unless otherwise indicated. In holomorphic coordinates, write
\[
 \omega=\omega_h=\ii h_{i\bar j}\,dz^i\wedge d\bar z^j,
 \qquad \dV=\frac{\omega^n}{n!}.
\]
The Chern connection, Chern torsion and curvature are normalized by
\begin{align}
 \Gamma^k_{ij}&=h^{k\bar\ell}\partial_i h_{j\bar\ell},
 &T^k_{ij}&=\Gamma^k_{ij}-\Gamma^k_{ji},\label{eq:connection}\\
 R_{i\bar j k\bar\ell}
 &=-\partial_i\partial_{\bar j}h_{k\bar\ell}
 +h^{p\bar q}(\partial_i h_{k\bar q})(\partial_{\bar j}h_{p\bar\ell}).\label{eq:curv}
\end{align}
The curvature satisfies
\begin{equation}\label{eq:herm}
 \overline{R_{i\bar j k\bar\ell}}=R_{j\bar i\ell\bar k}.
\end{equation}
In a local unitary frame with dual coframe $\ph^1,\ldots,\ph^n$, set
\begin{equation}\label{eq:torsionforms}
 \tau^a=\sum_{i<k}T^a_{ik}\ph^i\wedge\ph^k,
 \qquad \eta=\sum_i\eta_i\ph^i,
 \qquad \eta_i=\sum_r T^r_{ri}.
\end{equation}
The structure equations give
\begin{equation}\label{eq:dw}
 \partial\omega=\ii\sum_a\tau^a\wedge\bar\ph^a,
 \qquad \partial\omega^{n-1}=-\eta\wedge\omega^{n-1}.
\end{equation}
Indeed, at a point we may choose a smooth unitary frame whose Chern connection form vanishes. Then $d\ph^a=\tau^a$ at that point, and differentiating $\omega=\ii\sum_a\ph^a\wedge\bar\ph^a$ gives the first identity in \eqref{eq:dw}. We will use such frames for tensor calculations; holomorphic normal coordinates will be chosen only for a K\"ahler metric.

In particular, $h$ is balanced, meaning $d\omega^{n-1}=0$, if and only if $\eta=0$. It is pluriclosed if $\partial\bp\omega=0$, and Gauduchon if $\partial\bp\omega^{n-1}=0$. A Hermitian form $\alpha$ is called astheno-K\"ahler if $\partial\bp\alpha^{n-2}=0$.

We also have tensor norms:
\[
 |T|^2=\sum_{i,k,a}|T^a_{ik}|^2,
 \quad |\nabla'T|^2=\sum_{i,k,a,s}|T^a_{ik,s}|^2,
 \quad |\nabla''T|^2=\sum_{i,k,a,s}|T^a_{ik,\bar s}|^2.
\]
On exterior forms, increasing multi-indices give an orthonormal basis. Thus $\sum_a|\tau^a|^2=|T|^2/2$. Commas denote successive Chern covariant derivatives in the order in which the indices are written. We set $\Delta_h f=h^{i\bar j}\partial_i\partial_{\bar j}f$.

The four Chern--Ricci tensors are defined, in a unitary frame, by
\begin{align*}
 \Ric^{(1)}_{i\bar j}&=\sum_rR_{i\bar j r\bar r},
 &\Ric^{(2)}_{i\bar j}&=\sum_rR_{r\bar r i\bar j},\\
 \Ric^{(3)}_{i\bar j}&=\sum_rR_{i\bar r r\bar j},
 &\Ric^{(4)}_{i\bar j}&=\sum_rR_{r\bar j i\bar r}.
\end{align*}
By \eqref{eq:herm}, the first two tensors are Hermitian and
\[
 \Ric^{(4)}_{i\bar j}=\overline{\Ric^{(3)}_{j\bar i}}.
\]
We use the same notation for the associated $(1,1)$-forms,
\[
 \Ric_h^{(q)}=\ii\sum_{i,j}\Ric^{(q)}_{i\bar j}\ph^i\wedge\bar\ph^j,
 \qquad 1\le q\le4.
\]
Thus the first two forms are real and $\overline{\Ric_h^{(3)}}=\Ric_h^{(4)}$; the last two need not be real individually. In particular,
\[
 \Ric_h^{(1)}=-\ii\partial\bp\log\det h,
 \qquad s_h=\tr_h\Ric_h^{(1)}=\sum_{i,k}R_{i\bar i k\bar k}.
\]
For a $(1,1)$-form $\alpha=\ii a_{i\bar j}\ph^i\wedge\bar\ph^j$, we write $\tr_h\alpha=\sum_i a_{i\bar i}$. For a K\"ahler metric the four Ricci tensors coincide.

The first Bianchi identity, in the convention \eqref{eq:connection}, is
\begin{equation}\label{eq:Bianchi1}
 T^\ell_{ik,\bar j}=R_{k\bar j i\bar\ell}-R_{i\bar j k\bar\ell}.
\end{equation}
Tracing $k=\ell$ and using $\sum_kT^k_{ik}=-\eta_i$ gives
\[
 \Ric^{(4)}_{i\bar j}-\Ric^{(1)}_{i\bar j}=-\eta_{i,\bar j}.
\]
Consequently, as identities of forms,
\begin{equation}\label{eq:Riceta}
 \Ric_h^{(4)}-\Ric_h^{(1)}=\ii\bp\eta,
 \qquad
 \Ric_h^{(3)}-\Ric_h^{(1)}=-\ii\partial\bar\eta.
\end{equation}
Here we have used $\bp\eta=-\sum_{i,j}\eta_{i,\bar j}\ph^i\wedge\bar\ph^j$; the second identity follows by conjugating the first.

\subsection{Constant real bisectional curvature}
\begin{lemma}\label{lem:polar}
If $B_h\equiv c$, then in a unitary frame
\begin{equation}\label{eq:polar}
 R_{i\bar j k\bar\ell}+R_{k\bar\ell i\bar j}
 =2c\delta_{i\ell}\delta_{kj}.
\end{equation}
In particular, $H_h\equiv c$ and $s_h=nc$.
\end{lemma}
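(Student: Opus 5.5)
The plan is to turn the constancy of $B_h$ into pointwise identities for the symmetrized tensor
\[
S(x,\bar y,z,\bar w)=R(x,\bar y,z,\bar w)+R(z,\bar w,x,\bar y),
\]
and then polarize. By \eqref{eq:herm} we have $\overline{S(x,\bar y,z,\bar w)}=S(y,\bar x,w,\bar z)$, and $S$ is invariant under swapping the pairs $(x,\bar y)\leftrightarrow(z,\bar w)$. The target \eqref{eq:polar} says exactly that $S_{i\bar jk\bar\ell}=2c\,\delta_{i\ell}\delta_{kj}$. The right-hand side is also invariant under the pair swap, so the two sides are consistent.

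\emph{Step 1 (frame identities).} Fix a point and a unitary frame. The identity
\[
\sum_{i,j}R_{i\bar ij\bar j}a_ia_j=c\sum_ia_i^2
\]
holds on the open cone $\mathbb R^n_{>0}$. Both sides are polynomials, so their coefficients agree. This gives $R_{i\bar ii\bar i}=c$, and $R_{i\bar ij\bar j}+R_{j\bar jj\bar i}$-type symmetric sums vanish; precisely, $R_{i\bar ij\bar j}+R_{j\bar ji\bar i}=0$ for $i\ne j$. Every unit vector is part of some unitary frame, as is every orthonormal pair. Hence $H_h\equiv c$, and $S(u,\bar u,w,\bar w)=0$ whenever $u\perp w$.

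\emph{Step 2 (the mixed term).} Differentiate the identity $R(v,\bar v,v,\bar v)=c|v|^4$ at $v=u$ in a holomorphic direction $w$ with $w\perp u$, taking the $(1,0)$-linear part in $w$. This gives
\[
R(w,\bar u,u,\bar u)+R(u,\bar u,w,\bar u)=2c|u|^2h(w,\bar u)=0,
\]
that is, $S(u,\bar u,w,\bar u)=0$, and by conjugation also $S(u,\bar u,u,\bar w)=0$.

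Now take a unit vector $u$ and write an arbitrary $v$ as $v=\alpha u+w$ with $w\perp u$. Expanding $S(u,\bar u,v,\bar v)$ gives four kinds of terms:
\begin{itemize}
\item the $|\alpha|^2$-term, equal to $2c|\alpha|^2$;
\item the $S(u,\bar u,w,\bar w)$-term, which vanishes by Step 1 after rescaling $w$;
\item the cross terms, which vanish by Step 2.
\end{itemize}
Hence
\[
S(u,\bar u,v,\bar v)=2c\,h(v,\bar u)h(u,\bar v)\qquad\text{for all }u,v.
\]

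\emph{Step 3 (polarization).} For fixed $v$, the map $(x,y)\mapsto S(x,\bar y,v,\bar v)$ is sesquilinear. It is also Hermitian, by the conjugation symmetry of $S$ noted above. A Hermitian form is determined by its diagonal, so
\[
S(x,\bar y,v,\bar v)=2c\,h(x,\bar v)h(v,\bar y).
\]
Repeat the argument in the second pair, using the pair-swap symmetry of $S$, to get
\[
S(x,\bar y,z,\bar w)=2c\,h(x,\bar w)h(z,\bar y).
\]
In a unitary frame this is exactly \eqref{eq:polar}.

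\emph{Consequences.} Setting $i=j=k=\ell$ in \eqref{eq:polar} gives $2R_{i\bar ii\bar i}=2c$, which recovers $H_h\equiv c$. Tracing $i=j$ and $k=\ell$ gives $2s_h=2c\cdot n$, so $s_h=nc$.

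The main point requiring care is Step 2. Constancy of $B_h$ only controls the diagonal terms $R_{i\bar ij\bar j}$ in each frame. The off-diagonal cross terms $S(u,\bar u,w,\bar u)$ are not visible from the frame identities at all; they must be extracted from the first variation of $H_h\equiv c$.
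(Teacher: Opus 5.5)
Your proof is correct, but it takes a different route from the paper's.

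The paper handles all the frame information at once. For a Hermitian matrix $\xi$ it sets $\mathcal Q(\xi)=\sum R_{k\bar\ell s\bar t}\xi_{k\ell}\xi_{st}$. Writing $\xi_{k\ell}=\sum_i v_iA_{ik}\overline{A_{i\ell}}$ with $A$ unitary turns $\mathcal Q(\xi)$ into $\sum_{i,j}R(f_i,\bar f_i,f_j,\bar f_j)v_iv_j$ in the rotated frame $f=Ae$. By the spectral theorem, constancy of $B_h$ then gives $\mathcal Q(\xi)=c\tr(\xi^2)$ on positive definite $\xi$, and polynomial continuation extends this to all Hermitian $\xi$. One real polarization, followed by complex-linear extension to $\xi=E_{ij}$, $\zeta=E_{k\ell}$, yields \eqref{eq:polar} directly. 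In that formulation the mixed terms you treat separately in Step 2 are encoded automatically, because non-diagonal $\xi$ correspond to rotated frames.

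You instead take from each frame only the coefficient identities: $H_h\equiv c$, and $S(u,\bar u,w,\bar w)=0$ for $u\perp w$. You recover the missing terms $S(u,\bar u,w,\bar u)$ from the first variation of $H_h\equiv c$, and then polarize twice as sesquilinear forms. Your version is more hands-on and shows exactly which pieces of curvature information are needed. The paper's version is shorter and needs no differentiation.

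One small wording point: in your second polarization, the form $(z,w)\mapsto S(x,\bar y,z,\bar w)$ is not Hermitian when $x\ne y$. The step still goes through, because over $\mathbb C$ every sesquilinear form is determined by its diagonal. Just cite that general polarization identity rather than the Hermitian one.
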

\begin{proof}
This is the polarization identity of Yang--Zheng \cite[\S3]{YZ}; we recall the argument for completeness. Fix a point and a unitary frame $e=(e_1,\ldots,e_n)$. For a Hermitian matrix $\xi=(\xi_{k\ell})$, set
\[
 \mathcal Q(\xi)=\sum_{k,\ell,s,t}R_{k\bar\ell s\bar t}\xi_{k\ell}\xi_{st}.
\]
Let $A=(A_{ik})$ be a unitary matrix and set $f_i=\sum_k A_{ik}e_k$. For real numbers $v_1,\ldots,v_n$, write
\[
 \xi_{k\ell}=\sum_i v_iA_{ik}\overline{A_{i\ell}}.
\]
The matrix $\xi$ is Hermitian. Expanding in the fixed frame gives
\begin{align*}
 &\sum_{i,j}R(f_i,\bar f_i,f_j,\bar f_j)v_iv_j\\
 &\quad=\sum_{i,j,k,\ell,s,t}v_iv_jA_{ik}\overline{A_{i\ell}}
 A_{js}\overline{A_{jt}}R_{k\bar\ell s\bar t}\\
 &\quad=\sum_{k,\ell,s,t}\xi_{k\ell}\xi_{st}R_{k\bar\ell s\bar t}
 =\mathcal Q(\xi).
\end{align*}
Moreover, unitarity yields
\begin{align*}
 \tr(\xi^2)
 &=\sum_{k,\ell}\xi_{k\ell}\xi_{\ell k}\\
 &=\sum_{i,j}v_iv_j
 \left(\sum_kA_{ik}\overline{A_{jk}}\right)
 \left(\sum_\ell A_{j\ell}\overline{A_{i\ell}}\right)
 =\sum_i v_i^2.
\end{align*}
For $v_i>0$, the definition of $B_h$ therefore gives
\begin{equation}\label{eq:matrixRBC}
 \mathcal Q(\xi)=c\tr(\xi^2).
\end{equation}
Both sides are quadratic polynomials on the real vector space of Hermitian matrices. Hence \eqref{eq:matrixRBC} holds for every Hermitian matrix $\xi$.

For Hermitian matrices $\xi,\zeta$, polarization now gives
\begin{align*}
 &\sum_{i,j,k,\ell}(R_{i\bar j k\bar\ell}+R_{k\bar\ell i\bar j})\xi_{ij}\zeta_{k\ell}\\
 &\qquad=\mathcal Q(\xi+\zeta)-\mathcal Q(\xi)-\mathcal Q(\zeta)
 =2c\tr(\xi\zeta).
\end{align*}
Every complex matrix is a complex linear combination of Hermitian matrices. Extend the last bilinear identity complex linearly and take $\xi=E_{ij}$ and $\zeta=E_{k\ell}$. Since $\tr(E_{ij}E_{k\ell})=\delta_{jk}\delta_{i\ell}$, we obtain \eqref{eq:polar}.

Taking $a=(1,0,\ldots,0)$ and $a=(1,\ldots,1)$ in \eqref{eq:Bdef}, we obtain $H_h=c$ and $s_h=nc$, respectively.
\end{proof}

\begin{lemma}\label{lem:ddbar}
Set $\Sigma=\sum_a\tau^a\wedge\bar\tau^a$. If $B_h\equiv c$, then
\begin{equation}\label{eq:ddbar}
 \ii\partial\bp\omega=c\omega^2+\Sigma.
\end{equation}
\end{lemma}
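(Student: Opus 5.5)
Our plan is to compute $\ii\partial\bp\omega$ directly from the structure equations in a unitary frame. Then we substitute the polarized identity \eqref{eq:polar} of Lemma~\ref{lem:polar}.

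Fix a point and choose a unitary frame whose Chern connection form vanishes there. By \eqref{eq:dw}, $\ii\partial\bp\omega=-\ii\bp\partial\omega$ is obtained by applying $\bp$ to $\ii\sum_a\tau^a\wedge\bar\ph^a$. We use the full structure equations, not just their value at the point. The Chern connection forms $\theta$ satisfy $d\ph=-\theta\wedge\ph+\tau$, and $\Theta=d\theta-\theta\wedge\theta$ is the curvature. Two contributions to $d(\tau^a\wedge\bar\ph^a)$ then appear.

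\textbf{First contribution.} Differentiating $\bar\ph^a$ gives $\tau^a\wedge\bar\tau^a$ at the point, since $d\bar\ph^a=\bar\tau^a$ there.

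\textbf{Second contribution.} Differentiating $\tau^a$ is handled through the exterior derivative of the first structure equation, $d\tau=d\theta\wedge\ph+\dots=\Theta\wedge\ph$ at the point. Its $(2,1)$-part is $\sum_{k}\Theta^a_k\wedge\ph^k$ together with $\bp$-derivatives of $T$. Equivalently, $\bp\tau^a=\sum R\ldots$ by the first Bianchi identity \eqref{eq:Bianchi1}.

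Concretely, one obtains the standard formula
\[
 \ii\partial\bp\omega=\sum_a\tau^a\wedge\bar\tau^a
 -\ii\sum R_{i\bar j k\bar\ell}\,\ph^i\wedge\bar\ph^j\wedge\ph^k\wedge\bar\ph^\ell
\]
up to sign and normalization, which must be tracked carefully against \eqref{eq:curv}. The key point is that only the part of $R_{i\bar jk\bar\ell}$ symmetric under $(i\bar j)\leftrightarrow(k\bar\ell)$ survives the wedge. This is because $\ph^i\wedge\bar\ph^j$ and $\ph^k\wedge\bar\ph^\ell$ are even forms and commute. Hence the curvature term equals $\tfrac12\sum(R_{i\bar jk\bar\ell}+R_{k\bar\ell i\bar j})\ph^i\wedge\bar\ph^j\wedge\ph^k\wedge\bar\ph^\ell$.

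Inserting \eqref{eq:polar}, this becomes $c\sum_{i,k}\ph^i\wedge\bar\ph^k\wedge\ph^k\wedge\bar\ph^i$. Comparing with $\omega^2=-\sum_{i,k}\ph^i\wedge\bar\ph^i\wedge\ph^k\wedge\bar\ph^k$, and using that reordering $\bar\ph^k$ and $\bar\ph^i$ gives a sign, this is $\pm c\,\omega^2$ times a constant. That yields \eqref{eq:ddbar}.

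The main obstacle is the bookkeeping of signs and factors: the factor $\ii$, the orientation of $\tau^a\wedge\bar\tau^a$, and the factor $2$ coming from the polarization. Each must be fixed consistently with \eqref{eq:connection}, \eqref{eq:curv} and \eqref{eq:dw}.

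As a sanity check we will use the Kähler case, where $\tau=0$ and $\partial\bp\omega=0$. Then the symmetrized curvature must vanish, which forces $c=0$. This is consistent with constant real bisectional curvature forcing Kähler metrics to be flat when $n\ge2$.

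A second check comes from tracing: applying $\omega^{n-2}$ and integrating relates $s_h$ and $|T|^2$. We will also check the formula against a simple non-Kähler example such as a Hopf-type or nilmanifold metric to pin down the sign of $\Sigma$.
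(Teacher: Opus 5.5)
Your strategy is sound. It is the unitary-frame version of the paper's computation: the paper works in holomorphic coordinates, writes $\ii\partial\bp\omega$ through the second-derivative combination $D$, and gets \eqref{eq:fourterm} from \eqref{eq:curv}. Your symmetrization under $(i\bar j)\leftrightarrow(k\bar\ell)$ and the insertion of \eqref{eq:polar} are also right.

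The gap is that the lemma is precisely a statement about coefficients, and you never determine them. You stop at ``up to sign and normalization'' and ``$\pm c\omega^2$ times a constant''. Worse, your displayed formula is wrong. The factor $-\ii$ cannot be there, because $\ii\partial\bp\omega$, $\Sigma$, and (by \eqref{eq:herm}) $\sum R_{i\bar jk\bar\ell}\ph^i\wedge\bar\ph^j\wedge\ph^k\wedge\bar\ph^\ell$ are all real forms.

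Your sanity checks would not settle the coefficient of $c\omega^2$ either. For a K\"ahler metric the curvature term vanishes identically by symmetry, whatever its normalization. A Chern-flat nilmanifold only detects the sign in front of $\Sigma$.

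There is also a risk of double counting. At the point, $\bp\tau^a$ is $\sum_b\Theta_{ba}\wedge\ph^b$ alone. The $\bp$-derivatives of $T$ are the same quantity rewritten via the Bianchi identity, not an additional term.

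The missing bookkeeping is short if you use \eqref{eq:Bianchi1}, which is already normalized in the paper's conventions, instead of a separately normalized $\Theta$. At the point $p$ where the connection form vanishes, $\bp\ph^i=0$ and $\bp\bar\ph^a=\bar\tau^a$. By \eqref{eq:Bianchi1},
\[
 \bp\tau^a=\tfrac12\sum_{i,j,k}T^a_{ik,\bar j}\,\bar\ph^j\wedge\ph^i\wedge\ph^k
 =\sum_{i,j,k}R_{i\bar jk\bar a}\,\ph^i\wedge\bar\ph^j\wedge\ph^k .
\]
Hence
\[
 \ii\partial\bp\omega=-\ii\bp\partial\omega
 =\sum_a\bigl(\bp\tau^a\wedge\bar\ph^a+\tau^a\wedge\bar\tau^a\bigr)
 =\Sigma+\sum_{i,j,k,\ell}R_{i\bar jk\bar\ell}\,\ph^i\wedge\bar\ph^j\wedge\ph^k\wedge\bar\ph^\ell,
\]
with coefficient exactly $+1$. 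Your symmetrization and \eqref{eq:polar} then turn the last sum into
\[
 c\sum_{i,k}\ph^i\wedge\bar\ph^k\wedge\ph^k\wedge\bar\ph^i
 =-c\sum_{i,k}\ph^i\wedge\bar\ph^i\wedge\ph^k\wedge\bar\ph^k
 =c\,\omega^2,
\]
which is \eqref{eq:ddbar}. With this filled in, your argument is a clean alternative to the paper's coordinate proof. It yields the general identity for an arbitrary Hermitian metric before any curvature hypothesis is used. The term $\Sigma$ comes directly from $d\bar\ph^a=\bar\tau^a$ at $p$, rather than from regrouping the first-derivative terms of \eqref{eq:curv}.
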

\begin{proof}
In holomorphic coordinates define
\begin{align*}
 D_{i\bar j k\bar\ell}
 &=\partial_i\partial_{\bar j}h_{k\bar\ell}
 +\partial_k\partial_{\bar\ell}h_{i\bar j}
 -\partial_i\partial_{\bar\ell}h_{k\bar j}
 -\partial_k\partial_{\bar j}h_{i\bar\ell},\\
 C_{i\bar j k\bar\ell}
 &=R_{i\bar j k\bar\ell}+R_{k\bar\ell i\bar j}
 -R_{i\bar\ell k\bar j}-R_{k\bar j i\bar\ell}.
\end{align*}
It follows from \eqref{eq:curv} that
\begin{equation}\label{eq:fourterm}
 D_{i\bar j k\bar\ell}=h_{p\bar q}T^p_{ik}\overline{T^q_{j\ell}}-C_{i\bar j k\bar\ell}.
\end{equation}
Here the terms involving first derivatives are
\[
 h^{p\bar q}(\partial_i h_{k\bar q}-\partial_k h_{i\bar q})
 (\partial_{\bar j}h_{p\bar\ell}-\partial_{\bar\ell}h_{p\bar j}).
\]
Also,
\[
 \ii\partial\bp\omega
 =\sum_{i<k,\,j<\ell}D_{i\bar j k\bar\ell}\,
 dz^i\wedge dz^k\wedge d\bar z^j\wedge d\bar z^\ell.
\]
At a fixed point, choose the coordinates so that $h_{i\bar j}=\delta_{ij}$. By \eqref{eq:polar},
\[
 C_{i\bar j k\bar\ell}=2c(\delta_{i\ell}\delta_{kj}-\delta_{ij}\delta_{k\ell}).
\]
Since $\omega^2=2\sum_{i<k}\ph^i\wedge\ph^k\wedge\bar\ph^i\wedge\bar\ph^k$, formula \eqref{eq:fourterm} proves \eqref{eq:ddbar}.
\end{proof}

\subsection{The torsion Jacobi identity}
Define the Jacobi tensor by
\begin{equation}\label{eq:Jacdef}
 \Jac^a_{ijk}=\sum_b\left(T^b_{ij}T^a_{bk}+T^b_{jk}T^a_{bi}+T^b_{ki}T^a_{bj}\right).
\end{equation}
It is alternating in $i,j,k$. Let
\[
 J^a=\sum_{i<j<k}\Jac^a_{ijk}\ph^i\wedge\ph^j\wedge\ph^k.
\]
In particular, $\sum_a|J^a|^2=\frac16\sum_{a,i,j,k}|\Jac^a_{ijk}|^2$.

We shall use the pointwise identity in \cite[Lemma 2.2]{LR}. For completeness, we also give the proof.
\begin{lemma}[Lin--Ren]\label{lem:LR}
Let $T\in\Lambda^2 V^*\otimes V$ on an $n$-dimensional Hermitian vector space satisfy $\sum_bT^b_{bj}=0$. Define $\tau^a$, $\Sigma$, and $J^a$ as above, and put
\[
 U=\ii\sum_b\tau^b\wedge\bar\ph^b,
 \qquad V_0=\bar U=-\ii\sum_b\ph^b\wedge\bar\tau^b.
\]
For $n\ge4$, define
\[
 Q_n=\begin{cases}
 \Sigma^2,&n=4,\\[3pt]
 \displaystyle\Sigma^2\wedge\frac{\omega^{n-4}}{(n-4)!}
 +\ii\Sigma\wedge U\wedge V_0\wedge\frac{\omega^{n-5}}{(n-5)!},&n\ge5.
 \end{cases}
\]
Then
\begin{equation}\label{eq:LR}
 Q_n=\sum_a|J^a|^2\frac{\omega^n}{n!}.
\end{equation}
For $n\le3$, the same trace condition implies $\Jac=0$.
\end{lemma}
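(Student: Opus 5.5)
We prove \eqref{eq:LR} by a pointwise computation at a single point, in a unitary coframe $\ph^1,\ldots,\ph^n$. Every form involved has type $(n,n)$, so it is enough to compare the coefficients of $\frac{\omega^n}{n!}=\prod_i(\ii\ph^i\wedge\bar\ph^i)$. We first write each building block in index form:
\[
 \Sigma=\tfrac14\sum_{a,i,k,j,\ell}T^a_{ik}\overline{T^a_{j\ell}}\,\ph^i\wedge\ph^k\wedge\bar\ph^j\wedge\bar\ph^\ell,
\]
\[
 U=\tfrac{\ii}{2}\sum T^b_{ik}\,\ph^i\wedge\ph^k\wedge\bar\ph^b,
\qquad
 V_0=\overline U .
\]

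Next we expand the wedge products with powers of $\omega$. The form $\omega^{m}/m!$ is the sum of $\prod_{r\in S}\ii\ph^r\wedge\bar\ph^r$ over all subsets $S$ of size $m$. So wedging a form of bidegree $(p,p)$ with $\omega^{n-p}/(n-p)!$ picks out its \emph{trace} over the complementary indices. Concretely, $\alpha\wedge\omega^{n-p}/(n-p)!$ equals $\frac{\omega^n}{n!}$ times a constant $\epsilon_p$ (a sign and power of $\ii$, which we track once) multiplied by $\sum_{|I|=p}$ of the coefficient of $\alpha$ on $\ph^I\wedge\bar\ph^I$, with antisymmetrization.

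We then evaluate the $n=4$ term $\Sigma^2$ on index sets $I$ of size $4$. Its coefficient is a sum over the ways to split $I$ into two pairs on the holomorphic side and two pairs on the antiholomorphic side. This gives expressions of the form $\sum_{a,b}T^a_{ik}T^b_{pq}\overline{T^a_{j\ell}T^b_{rs}}$, signed by the permutation taking $(j\ell rs)$ to $(ikpq)$.

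In the same way, the term $\ii\Sigma\wedge U\wedge V_0$ contributes sums over index sets of size $5$. These contain contractions in which the "$\bar b$" slot of $U$ is forced to equal one of the free holomorphic indices. Summing over all $I$ turns these expressions into full contractions of the schematic shape
\[
 \sum T^a_{ik}T^b_{pq}\overline{T^a_{ik}T^b_{pq}}\ \text{(antisymmetrized)}\quad\text{and}\quad \sum T^a_{ik}T^b_{ap}\overline{T^c_{ik}T^b_{cp}}\ \text{(type of }U\wedge V_0).
\]
We then expand $\frac16\sum|\Jac^a_{ijk}|^2$. Using $\Jac^a_{ijk}=\sum_b(T^b_{ij}T^a_{bk}+\text{cyclic})$, it equals $\frac12\sum|\sum_bT^b_{ij}T^a_{bk}|^2$ plus cross terms of the form $\sum T^b_{ij}T^a_{bk}\overline{T^c_{jk}T^a_{ci}}$.

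The key step is to match these quartic invariants. The diagonal terms of $\Sigma^2$, where the holomorphic and antiholomorphic index sets coincide, give $|\Sigma|$-type squares. They cancel against the part of the $U\wedge V_0$ term where the repeated index falls outside the pair. What survives are exactly the terms with one index of $T^a$ fed into a $T^b$, which are the Jacobi-type contractions. In this matching, every term involving a trace $\sum_bT^b_{bj}$ vanishes by hypothesis. We expect this combinatorial bookkeeping of signs and multiplicities to be the main obstacle.

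To make it manageable, we would organize the computation by the invariant basis of $U(n)$-quartic contractions of $T\otimes T\otimes\bar T\otimes\bar T$. There are only a handful such bases modulo traces. We would compute the coefficient of each basis element on both sides, as polynomials in $n$. The factorials $(n-4)!$ and $(n-5)!$ are chosen precisely so that these coefficients become independent of $n$. As a sanity check, we would test the identity on simple examples, such as Lie algebra structure constants (where $\Jac=0$) and a single nonzero $T^a_{ik}$.

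Finally, for $n\le3$ the cases are direct. For $n\le2$ an alternating $3$-tensor vanishes, so $\Jac=0$. For $n=3$, $\Jac^a_{123}$ is the only component. Writing $T$ via the isomorphism $\Lambda^2V^*\cong V\otimes\det V^*$, i.e.\ $T^a_{ij}=\epsilon_{ijk}M^{ka}$, the trace condition says that $M$ is symmetric. A direct computation then gives $\Jac^a_{123}=\sum_{b,k}\epsilon\,M M$, which is the antisymmetric part of $M^2$ contracted and hence vanishes.
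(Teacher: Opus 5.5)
Your argument for $n\le3$ is correct and matches the paper's, in other notation. With $T^a_{ij}=\sum_k\epsilon_{ijk}M^{ka}$, the trace condition says $M$ is symmetric, and $\Jac^a_{123}=\sum_m\bigl(\sum_{k,b}\epsilon_{bkm}M^{kb}\bigr)M^{ma}$ vanishes because the bracket depends only on the antisymmetric part of $M$ (not of $M^2$, as you wrote).

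The gap is the case $n\ge4$, which is the whole content of the lemma. There you do not prove \eqref{eq:LR}; you describe a comparison of quartic contractions that you would carry out. The cancellation pattern is asserted rather than derived: which terms of $\Sigma^2\wedge\omega^{n-4}/(n-4)!$ cancel against which terms of the mixed term, and why what survives is exactly $\frac16\sum|\Jac^a_{ijk}|^2$ up to terms containing $\sum_bT^b_{bj}$.

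The bookkeeping device is also not adequate as stated. The quartic $U(n)$-contractions of $T\otimes T\otimes\bar T\otimes\bar T$ satisfy dimension-dependent linear relations when $n$ is small. So equality of coefficients in a fixed ``basis'' is sufficient but not necessary, and at $n=4$ it actually fails. The contraction expression for $\sum_{a,b,c}\inner{\ph^c\wedge\tau^a\wedge\tau^b}{\ph^b\wedge\tau^a\wedge\tau^c}$ is not zero modulo traces: it equals $1$ for the trace-free example of Remark~\ref{rem:LRscope} on $\mathbb C^5$. Yet it vanishes identically for $n=4$ because $\Lambda^5V^*=0$, and this is the only reason $Q_4=\Sigma^2$ needs no mixed term.

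The missing idea is the one that shortens the computation to a few lines. Put $W^{ab}=\tau^a\wedge\tau^b$, let $\iota_b$ be contraction with $e_b$, and let $\eps_b=\ph^b\wedge\cdot$. Then $\sum_b\iota_bW^{ab}=J^a+\tau^a\wedge\eta$, so the trace condition is used exactly once, to get $J^a=\sum_b\iota_bW^{ab}$. Since $\eps_b^*=\iota_b$ and $\iota_c\eps_b+\eps_b\iota_c=\delta_{bc}$,
\[
 \sum_a|J^a|^2=\sum_{a,b}|W^{ab}|^2-\sum_{a,b,c}\inner{\eps_cW^{ab}}{\eps_bW^{ac}}.
\]
Now apply the unitary Hodge identity $\ii^{r^2}\alpha\wedge\bar\beta\wedge\omega^{n-r}/(n-r)!=\inner{\alpha}{\beta}\,\omega^n/n!$ for $(r,0)$-forms. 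With $r=4$, it identifies the first term times $\omega^n/n!$ with $\Sigma^2\wedge\omega^{n-4}/(n-4)!$. With $r=5$, it identifies the second term (including its sign) times $\omega^n/n!$ with $\ii\Sigma\wedge U\wedge V_0\wedge\omega^{n-5}/(n-5)!$, which is absent for $n=4$. This explains at once the $n$-independence, the exact role of the trace condition, and the special form of $Q_4$. Without it, or a complete execution of your index computation that handles the low-dimensional relations, the proposal does not establish \eqref{eq:LR}.
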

\begin{proof}
Let $\iota_b$ be contraction with $e_b$, let $\eps_b\alpha=\ph^b\wedge\alpha$, and set $W^{ab}=\tau^a\wedge\tau^b$. Direct expansion of \eqref{eq:Jacdef} gives
\[
 J^a=\sum_b\tau^b\wedge\iota_b\tau^a.
\]
Since $\sum_b\iota_b\tau^b=\eta=0$, the contraction rule gives
\begin{equation}\label{eq:JW}
 J^a=\sum_b\iota_bW^{ab}.
\end{equation}
Take the Hermitian inner product to be linear in its first variable. Using $\eps_b^*=\iota_b$ and $\iota_c\eps_b+\eps_b\iota_c=\delta_{bc}\mathrm{Id}$, we find
\begin{align}
 \sum_a|J^a|^2
 &=\sum_{a,b,c}\inner{\iota_bW^{ab}}{\iota_cW^{ac}}\notag\\
 &=\sum_{a,b,c}\inner{W^{ab}}{\eps_b\iota_cW^{ac}}\notag\\
 &=\sum_{a,b}|W^{ab}|^2
 -\sum_{a,b,c}\inner{\eps_cW^{ab}}{\eps_bW^{ac}}.\label{eq:creation}
\end{align}
The last equality follows from $\eps_b\iota_c=\delta_{bc}-\iota_c\eps_b$.

For $(r,0)$-forms $\alpha,\beta$, the unitary Hodge identity is
\begin{equation}\label{eq:Hodge}
 \ii^{\,r^2}\alpha\wedge\bar\beta\wedge\frac{\omega^{n-r}}{(n-r)!}
 =\inner{\alpha}{\beta}\frac{\omega^n}{n!}.
\end{equation}
Applying \eqref{eq:Hodge} with $r=4$ and using $\Sigma^2=\sum_{a,b}W^{ab}\wedge\overline{W^{ab}}$, we obtain
\begin{equation}\label{eq:Wnorm}
 \Sigma^2\wedge\frac{\omega^{n-4}}{(n-4)!}
 =\sum_{a,b}|W^{ab}|^2\frac{\omega^n}{n!}.
\end{equation}
Rearranging the factors in the mixed term gives
\begin{equation}\label{eq:mixed}
 \ii\Sigma\wedge U\wedge V_0
 =-\ii\sum_{a,b,c}\eps_cW^{ab}\wedge\overline{\eps_bW^{ac}}.
\end{equation}
The minus sign is due to moving $\ph^c$ past $\bar\tau^a\wedge\bar\ph^b$, which has degree three. For $n\ge5$, apply \eqref{eq:Hodge} with $r=5$ to \eqref{eq:mixed}; together with \eqref{eq:Wnorm} and \eqref{eq:creation}, this gives \eqref{eq:LR}. For $n=4$, the five-forms $\eps_cW^{ab}$ vanish, and the same conclusion follows from \eqref{eq:creation} and \eqref{eq:Wnorm}.

If $n=3$, write $a=T(e_2,e_3)$, $b=T(e_3,e_1)$, and $c=T(e_1,e_2)$, with components $a_r,b_r,c_r$. The trace condition says $b_3=c_2$, $a_3=c_1$, and $b_1=a_2$. Hence
\[
 \Jac(e_1,e_2,e_3)
 =(c_2-b_3)a+(a_3-c_1)b+(b_1-a_2)c=0.
\]
For $n\le2$, the assertion follows from $\Lambda^3V^*=0$.
\end{proof}

\begin{remark}\label{rem:LRscope}
Lemma~\ref{lem:LR} requires only skew-symmetry and the trace condition $\eta=0$, the latter being used in \eqref{eq:JW}. Thus it applies without the Chern K\"ahler-like assumption or the condition $\nabla''T=0$. In our norm convention,

\[
 \sum_a|J^a|^2\,\dV=\frac16\sum_{a,i,j,k}|\Jac^a_{ijk}|^2\,\dV.
\]
The mixed term in $Q_n$ cannot be omitted when $n\ge5$. For example, on $\mathbb C^5$ take $T(e_1,e_2)=e_3$, $T(e_3,e_4)=e_5$, extend by skew-symmetry, and set all other components to zero. This tensor has zero trace, and direct expansion gives
\[
 \Sigma^2\wedge\omega=2\,\dV,\qquad
 \ii\Sigma\wedge U\wedge V_0=-\dV,\qquad
 \sum_a|J^a|^2=1.
\]
This algebraic example also shows that the zero trace condition alone does not imply the Jacobi identity in higher dimensions.
\end{remark}

\begin{proposition}\label{prop:Jac}
Let $(M^n,h)$ be compact and balanced. If
\begin{equation}\label{eq:Sigmaexact}
 \ii\partial\bp\omega=\Sigma,
\end{equation}
then its Chern torsion satisfies $\Jac(T)=0$.
\end{proposition}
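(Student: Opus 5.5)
We integrate the Lin--Ren identity of Lemma~\ref{lem:LR} over $M$ and show that the integral of $Q_n$ vanishes because $\Sigma$ is $\ii\partial\bp$-exact. Since $h$ is balanced, $\eta=0$, so Lemma~\ref{lem:LR} applies pointwise. For $n\le3$ it gives $\Jac=0$ directly, so we assume $n\ge4$. Integrating \eqref{eq:LR} gives
\[
 \int_M\sum_a|J^a|^2\,\frac{\omega^n}{n!}=\int_M Q_n .
\]
It therefore suffices to show that $\int_MQ_n=0$. Then $\sum_a|J^a|^2\equiv0$ and hence $\Jac(T)=0$.

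\textbf{The term $\Sigma^2\wedge\omega^{n-4}$.} Substitute \eqref{eq:Sigmaexact} for one factor of $\Sigma$:
\[
 \int_M\Sigma^2\wedge\omega^{n-4}=\int_M\ii\partial\bp\omega\wedge\Sigma\wedge\omega^{n-4}.
\]
Now integrate by parts twice. Since $\Sigma=\ii\partial\bp\omega$ is itself $d$-closed, we obtain
\[
 \int_M\ii\partial\bp\omega\wedge\Sigma\wedge\omega^{n-4}=\int_M\omega\wedge\Sigma\wedge\ii\partial\bp(\omega^{n-4}).
\]
Expand the last factor:
\[
 \ii\partial\bp\omega^{n-4}=(n-4)\,\ii\partial\bp\omega\wedge\omega^{n-5}+(n-4)(n-5)\,\ii\,\partial\omega\wedge\bp\omega\wedge\omega^{n-6}.
\]
The first piece is again $\Sigma\wedge\omega^{n-5}$ up to a constant. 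For the second piece we use \eqref{eq:dw}, which gives $\partial\omega=U$ and $\bp\omega=\overline{U}=V_0$, so it is proportional to $\ii\,U\wedge V_0\wedge\omega^{n-6}$.

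\textbf{Using the balanced condition.} The cleanest route is to integrate by parts against $\omega^{n-3}$ and $\omega^{n-4}$ and use $d\omega^{n-1}=0$. On a balanced manifold this gives $\partial\omega\wedge\omega^{n-2}=0$, i.e.\ $U\wedge\omega^{n-2}=0$, so $U$ is primitive. The same holds for $\Sigma$, which is primitive in the appropriate sense since $\sum_a|\tau^a|^2$ relates to traces. Concretely, I would write
\[
 \int_M\Sigma\wedge\Sigma\wedge\frac{\omega^{n-4}}{(n-4)!}=\int_M\ii\partial\bp\omega\wedge\Sigma\wedge\frac{\omega^{n-4}}{(n-4)!}
\]
and move $\partial$ off $\bp\omega$ onto $\Sigma\wedge\omega^{n-4}$. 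Because $\partial\Sigma=0$ (as $\Sigma$ is $\partial\bp$-exact), what remains is
\[
 -\int\ii\,\bp\omega\wedge\Sigma\wedge\partial\omega^{n-4}/(n-4)!=-\int\ii\,V_0\wedge\Sigma\wedge U\wedge\omega^{n-5}/(n-5)!,
\]
using $\partial\omega^{n-4}=(n-4)\,\partial\omega\wedge\omega^{n-5}$. Up to sign bookkeeping, this is exactly the negative of the mixed term in $Q_n$. Hence $\int_MQ_n=0$. For $n=4$ the boundary term is absent, since $\partial\omega^{0}=0$, so the integral vanishes directly.

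\textbf{Main obstacle.} The main difficulty is the sign bookkeeping in the single integration by parts: we must confirm that the term produced is precisely $-\int\ii\Sigma\wedge U\wedge V_0\wedge\omega^{n-5}/(n-5)!$. This requires commuting the odd-degree forms $U$ and $V_0$ past $\Sigma$, which has degree 4 and so commutes freely, and tracking the sign from Stokes ($\int\partial\alpha\wedge\beta=-(-1)^{\deg\alpha}\int\alpha\wedge\partial\beta$). The Lin--Ren definition of $Q_n$ appears designed for exactly this cancellation. I would verify the sign on the $\mathbb C^5$ example from Remark~\ref{rem:LRscope}, or simply by degree counting.

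In this route the balanced hypothesis is used only to invoke Lemma~\ref{lem:LR}.
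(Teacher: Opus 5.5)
Your proposal is correct and is essentially the paper's proof. The paper notes via \eqref{eq:power} that $Q_n=\ii\partial\bp\bigl(\Sigma\wedge\omega^{n-3}/(n-3)!\bigr)$ is exact, which is the same Stokes computation as your single integration by parts. Your detour of integrating by parts twice is not needed. Your remark that $\Sigma$ is primitive is false, since $\Sigma\wedge\omega^{n-2}=\frac{|T|^2}{2n(n-1)}\omega^n$, but you never use it.

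One sign in your display needs fixing. Stokes gives
\[
\int\ii\partial\bp\omega\wedge\Sigma\wedge\omega^{n-4}=+\int\ii\bp\omega\wedge\Sigma\wedge\partial\omega^{n-4}.
\]
The minus sign you need then comes from $\bp\omega\wedge\partial\omega=-\partial\omega\wedge\bp\omega$, which turns the right-hand side into minus the mixed term of $Q_n$.
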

\begin{proof}
Since $h$ is balanced, $\eta=0$. Lemma~\ref{lem:LR} proves the assertion when $n\le3$, so we assume that $n\ge4$. By \eqref{eq:Sigmaexact}, we have $\partial\Sigma=\bp\Sigma=0$. For any positive integer $m$,

\begin{equation}\label{eq:power}
 \partial\bp\omega^m
 =m\partial\bp\omega\wedge\omega^{m-1}
 +m(m-1)\partial\omega\wedge\bp\omega\wedge\omega^{m-2}.
\end{equation}
Indeed, by the product rule,
\begin{align*}
 \partial\bp\omega^m
 &=m\partial(\bp\omega\wedge\omega^{m-1})\\
 &=m\partial\bp\omega\wedge\omega^{m-1}
 -m(m-1)\bp\omega\wedge\partial\omega\wedge\omega^{m-2}.
\end{align*}
Since $\bp\omega\wedge\partial\omega=-\partial\omega\wedge\bp\omega$, this gives \eqref{eq:power}.

For $n=4$, we have $Q_4=\ii\partial\bp(\Sigma\wedge\omega)$. For $n\ge5$, using \eqref{eq:dw}, \eqref{eq:Sigmaexact}, and \eqref{eq:power}, we obtain
\begin{align*}
 \ii\partial\bp\left(\Sigma\wedge\frac{\omega^{n-3}}{(n-3)!}\right)
 &=\Sigma^2\wedge\frac{\omega^{n-4}}{(n-4)!}
 +\ii\Sigma\wedge\partial\omega\wedge\bp\omega
 \wedge\frac{\omega^{n-5}}{(n-5)!}\\
 &=Q_n.
\end{align*}
The form $\Sigma$ is invariant under unitary changes of frame. Hence
\[
 \gamma=\Sigma\wedge\frac{\omega^{n-3}}{(n-3)!}
\]
is a globally defined $(n-1,n-1)$-form and $Q_n=\ii\partial\bp\gamma=d(\ii\bp\gamma)$. By Stokes' theorem and Lemma~\ref{lem:LR},
\[
 0=\int_M Q_n=\int_M\sum_a|J^a|^2\,\dV.
\]
It follows that $J^a=0$ for all $a$. Thus $\Jac(T)=0$ on $M$.

\end{proof}

\section{Constant real bisectional curvature}\label{sec:RBC}

\subsection{Excluding a negative constant}
\begin{lemma}\label{lem:auto}
On a Hermitian manifold, if $B_h\equiv c$ for a nonzero constant $c$, then $h$ is pluriclosed.
\end{lemma}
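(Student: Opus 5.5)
The plan is to prove the pointwise identity $\ii\partial\bp\omega=0$. By Lemma~\ref{lem:ddbar} this is equivalent to
\[
 \Sigma=-c\,\omega^2 ,
\]
that is, to $\sum_p T^p_{ik}\overline{T^p_{j\ell}}=2c(\delta_{i\ell}\delta_{kj}-\delta_{ij}\delta_{k\ell})$ in a unitary frame. Both sides of \eqref{eq:ddbar} involve only the curvature identity \eqref{eq:polar}, so the extra information has to come from differentiating. The hypothesis $c\neq0$ should enter by letting us divide by $c$.

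\textbf{Step 1 (Bianchi combined with polarization).} Insert \eqref{eq:polar} into the first Bianchi identity \eqref{eq:Bianchi1}, replacing $R_{k\bar j i\bar\ell}$ by $2c\delta_{ij}\delta_{k\ell}-R_{i\bar\ell k\bar j}$. This gives
\[
 T^\ell_{ik,\bar j}=2c\,\delta_{ij}\delta_{k\ell}-R_{i\bar\ell k\bar j}-R_{i\bar j k\bar\ell}.
\]
The curvature part is symmetric in $(j,\ell)$. Antisymmetrizing in $(j,\ell)$ therefore yields the purely torsional identity
\[
 T^\ell_{ik,\bar j}-T^j_{ik,\bar\ell}=2c(\delta_{ij}\delta_{k\ell}-\delta_{i\ell}\delta_{kj}).
\]
The symmetric part expresses $R_{i\bar j k\bar\ell}+R_{i\bar\ell k\bar j}$ through $\nabla''T$. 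Using \eqref{eq:polar} once more, the whole tensor $R$ is then determined by $c$ and $\nabla''T$.

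\textbf{Step 2 (differentiate and use the second Bianchi identity).} Since the right side of \eqref{eq:polar} is parallel, $\nabla R$ satisfies the homogeneous identity $R_{i\bar jk\bar\ell,p}+R_{k\bar\ell i\bar j,p}=0$. Combining this with the Chern second Bianchi identity $R_{i\bar jk\bar\ell,p}-R_{p\bar jk\bar\ell,i}=-T^q_{ip}R_{q\bar jk\bar\ell}$ (up to the sign convention), and then replacing $R$ by $c$-terms via \eqref{eq:polar}, should give an identity of the form
\[
 c\,T=\text{(terms built from }T\cdot R\text{ and }\nabla T\text{)}.
\]
Equivalently, one can take $\partial$ of \eqref{eq:ddbar}: because $\ii\partial\bp\omega$ is $d$-closed, we get $2c\,\omega\wedge\partial\omega=-\partial\Sigma$. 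The idea is to write $\partial\Sigma$ through $\nabla'T$ and $\bar T$, and to rewrite $\nabla'T$ through Step 1 conjugated, namely through $\overline{\nabla''T}$ and curvature. Substituting \eqref{eq:polar} again, the terms that are linear in $c$ then match $c\,(\Sigma+c\omega^2)$ contracted with $T$. Dividing by $c\ne0$ would give an identity that forces $\Sigma+c\omega^2=0$, possibly after a further contraction and taking norms.

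\textbf{Main obstacle.} The hardest part is the bookkeeping in Step 2: isolating a contraction in which $\Sigma+c\,\omega^2$ appears with a nonzero multiple of $c$ and all other terms cancel. I would first try the traced version. Contract Step 1 over $k=\ell$, and contract \eqref{eq:polar} over $i=j$, which gives $\Ric^{(1)}+\Ric^{(2)}=2c\,\mathrm{Id}$, and over $j=k$, which gives $\Ric^{(3)}+\Ric^{(4)}=2nc\,\mathrm{Id}$. Combined with \eqref{eq:Riceta}, these produce relations between $\ii\partial\bar\eta$, $\ii\bp\eta$ and $c\,\omega$. Only if the traced relations are insufficient would I return to the full tensor identity of Step 2.
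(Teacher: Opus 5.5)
Your proposal does not yet contain a proof, and the route you treat as the main one cannot work as stated. Step~1 is just Lemma~\ref{lem:ddbar} in tensor form: the $(j,\ell)$-antisymmetrization of $T^\ell_{ik,\bar j}$ is exactly what $\bp\partial\omega=\bp\bigl(\sum_a\tau^a\wedge\bar\ph^a\bigr)$ sees. So it yields nothing beyond $\ii\partial\bp\omega=c\omega^2+\Sigma$. Step~2 adds only one more derivative, and that is not enough. For $n=2$, the identity $2c\,\omega\wedge\partial\omega=-\partial\Sigma$ relates two $(3,2)$-forms on a surface and reads $0=0$. The tensorial version is also empty there. Differentiating \eqref{eq:polar} once and feeding in the second Bianchi identity, whose skew part is only $T\cdot R$, produces a compatibility condition that is alternating in three indices of the same type. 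It is $\Lambda^{3,2}$-valued, hence void when $n=2$. So in that dimension no bookkeeping in Step~2 can force $\Sigma+c\omega^2=0$. For $n\ge3$, the cancellation you hope for ("terms linear in $c$ match $c(\Sigma+c\omega^2)$ contracted with $T$") is not exhibited either.

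Your fallback contains the right identity but stops one step short. Contracting \eqref{eq:polar} over $j=k$ gives $\Ric^{(3)}+\Ric^{(4)}=2nc\,\mathrm{Id}$. Adding the two identities in \eqref{eq:Riceta} turns this into the $(1,1)$-form identity $2nc\,\omega=2\Ric_h^{(1)}+\ii(\bp\eta-\partial\bar\eta)$. The missing idea is to apply $\partial\bp$ to this identity instead of looking for a pointwise torsion identity. The form $\Ric_h^{(1)}=-\ii\partial\bp\log\det h$ is $\partial\bp$-closed, and $\partial\bp\bp\eta=0=\partial\bp\partial\bar\eta$. Hence $nc\,\partial\bp\omega=0$, and $c\neq0$ gives pluriclosedness. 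This is the paper's argument. It supplies exactly the second-order information your Step~2 lacks, and it needs no bookkeeping because every term other than $nc\,\omega$ is identically $\partial\bp$-closed. The relation $\Sigma=-c\omega^2$ then follows from Lemma~\ref{lem:ddbar} as a consequence, not as the route. The $i=j$ contraction and the trace of Step~1 are not needed.
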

\begin{proof}
Contract \eqref{eq:polar} with $(i,j,k,\ell)$ replaced by $(r,j,i,r)$. Summing over $r$ gives
\[
 \Ric^{(4)}_{i\bar j}+\Ric^{(3)}_{i\bar j}=2nc\delta_{ij}.
\]
Add the two identities in \eqref{eq:Riceta}. As identities of forms, this yields
\begin{align*}
 \ii(\bp\eta-\partial\bar\eta)
 &=\Ric_h^{(3)}+\Ric_h^{(4)}-2\Ric_h^{(1)}\\
 &=2nc\omega-2\Ric_h^{(1)}.
\end{align*}
Equivalently,
\begin{equation}\label{eq:ricciform}
 nc\omega=\Ric_h^{(1)}+\frac{\ii}{2}(\bp\eta-\partial\bar\eta).
\end{equation}
Apply $\partial\bp$. The first Ricci form is closed, while $\partial\bp(\bp\eta)=\partial\bp(\partial\bar\eta)=0$. Since $c$ is constant and nonzero, we obtain $\partial\bp\omega=0$.
\end{proof}

We first give the comparison identity used in both main theorems.
\begin{lemma}\label{lem:CL}
Let $g$ be a K\"ahler metric with $\Ric_g^{(1)}=a\omega_g$, where $a$ is constant, and let $h$ be a Hermitian metric. Put $u=\tr_g h$. Then
\begin{equation}\label{eq:CL}
 \Delta_g u=|\nabla\partial\mathrm{id}|^2+au-Q,\qquad
 Q=g^{i\bar j}g^{k\bar\ell}R^h_{i\bar j k\bar\ell}.
\end{equation}
Here $\mathrm{id}:(M,g)\to(M,h)$ is the identity map, and $\nabla$ is the connection induced on $(T^{1,0}M)^*\otimes\mathrm{id}^*T^{1,0}M$. The norm contracts the two input indices by $g$ and the output index by $h$.
\end{lemma}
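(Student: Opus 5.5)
We will prove \eqref{eq:CL} by a direct local computation in holomorphic normal coordinates for the K\"ahler metric $g$ at a fixed point $p$. These coordinates satisfy $g_{i\bar j}(p)=\delta_{ij}$ and $dg_{i\bar j}(p)=0$. The curvature of $g$ at $p$ is then $R^g_{i\bar j k\bar\ell}=-\partial_i\partial_{\bar j}g_{k\bar\ell}$. We also make a linear change of coordinates, unitary for $g$, so that $h_{i\bar j}(p)$ is diagonal; this is not strictly needed. Since the map is the identity, its differential is $\partial\mathrm{id}=dz^i\otimes\partial_i$. Its covariant derivative in the induced connection is the difference of the two Chern connections,
\[
 (\nabla\partial\mathrm{id})^k_{ij}=\Gamma^{h,k}_{ij}-\Gamma^{g,k}_{ij}.
\]
At $p$ this reduces to $\Gamma^{h,k}_{ij}=h^{k\bar q}\partial_ih_{j\bar q}$. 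Accordingly,
\[
 |\nabla\partial\mathrm{id}|^2=g^{i\bar j}g^{s\bar t}h_{k\bar\ell}\,\Gamma^{h,k}_{is}\overline{\Gamma^{h,\ell}_{jt}}
 =\sum_{i,s}h^{p\bar q}\partial_ih_{s\bar q}\,\partial_{\bar i}h_{p\bar s}\quad\text{at }p.
\]

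The next step is to compute $\Delta_g u=g^{i\bar j}\partial_i\partial_{\bar j}(g^{k\bar\ell}h_{k\bar\ell})$ at $p$. Since $dg(p)=0$, mixed first-derivative terms drop. Only two contributions survive. The first is $\sum_{i,k}\partial_i\partial_{\bar i}h_{k\bar k}$. The second is $\sum_{i}h_{k\bar\ell}\,\partial_i\partial_{\bar i}g^{k\bar\ell}$. At $p$ we have $\partial_i\partial_{\bar i}g^{k\bar\ell}=-\partial_i\partial_{\bar i}g_{\ell\bar k}=R^g_{i\bar i\ell\bar k}$, so this second piece equals $\sum_{k,\ell}\Ric^{(2)}_g{}_{\ell\bar k}h_{k\bar\ell}$. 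Because $g$ is K\"ahler, all four Ricci tensors coincide, so $\Ric^{(2)}_g=\Ric^{(1)}_g=a g$ and this term is $au$.

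For the first contribution we use \eqref{eq:curv} for $h$, traced over $g$ with the trace taken on the first index pair $i\bar j$ as in $Q$:
\[
 \sum_{i,k}\partial_i\partial_{\bar i}h_{k\bar k}=-\sum_{i,k}R^h_{i\bar i k\bar k}+\sum_{i,k}h^{p\bar q}\partial_ih_{k\bar q}\,\partial_{\bar i}h_{p\bar k}.
\]
The first term is $-Q$, and the second is exactly $|\nabla\partial\mathrm{id}|^2$ as computed above. Adding the two contributions gives \eqref{eq:CL}.

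The main obstacle is bookkeeping rather than any conceptual step. Three points must be checked consistently. First, the index placement in $|\nabla\partial\mathrm{id}|^2$ must be the one that matches \eqref{eq:curv}: the derivative direction $i$ is contracted by $g$, the slot index $s$ by $g$, and the output by $h$. Second, the trace defining $Q$ must fall on the derivative pair $i\bar j$ of $R^h$, not the endomorphism pair; this is what makes the first-derivative term a norm of $\Gamma^h$ rather than a torsion-twisted expression. Third, the sign convention must give $\partial\bar\partial g^{-1}=+R^g$, so that the Ricci term appears as $+au$.
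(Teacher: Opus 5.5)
Your computation is correct and is essentially the paper's argument. The paper invokes the Chern--Lu formula of Yang--Zheng (their Lemma 4.1) for $\mathrm{id}:(M,g)\to(M,h)$ and then, in the same coordinates ($g$ normal at the point, $h$ diagonal), records $|\nabla\partial\mathrm{id}|^2=\sum_{i,k,p}\lambda_p^{-1}|\partial_ih_{k\bar p}|^2$; you instead derive the formula directly, and your bookkeeping checks out: the trace on the first index pair of $R^h$, the identity $\partial_i\partial_{\bar i}g^{k\bar\ell}=R^g_{i\bar i\ell\bar k}$ at the center, and $\Ric^{(2)}_g=\Ric^{(1)}_g=a\,g$ for K\"ahler $g$ together give exactly $\Delta_g u=|\nabla\partial\mathrm{id}|^2+au-Q$.
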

\begin{proof}
Apply the Chern--Lu formula \cite[Lemma 4.1]{YZ} to $\mathrm{id}$. Since $g$ is K\"ahler and $\Ric_g^{(1)}=a\omega_g$, the source Ricci term is $au$, while the target curvature term is $Q$. For later calculations, choose holomorphic coordinates at a point such that
\begin{equation}\label{eq:diag}
 g_{i\bar j}=\delta_{ij},\qquad \partial g=0,\qquad h_{i\bar j}=\lambda_i\delta_{ij}.
\end{equation}
In these coordinates,
\[
 (\nabla\partial\mathrm{id})^p_{ik}=\Gamma(h)^p_{ik}-\Gamma(g)^p_{ik}
 =\lambda_p^{-1}\partial_i h_{k\bar p},
 \qquad
 |\nabla\partial\mathrm{id}|^2=\sum_{i,k,p}\lambda_p^{-1}|\partial_i h_{k\bar p}|^2.
\]
This gives the stated norm convention in \eqref{eq:CL}.
\end{proof}

\begin{proposition}\label{prop:c0}
Let $(M^n,h)$ be compact, with $n\ge2$. If $B_h\equiv c$ is constant, then $c=0$.
\end{proposition}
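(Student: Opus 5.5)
By Proposition~\ref{prop:YZintro} we already know $c\le0$, so it suffices to rule out $c<0$. Assume $c<0$. By Lemma~\ref{lem:auto}, $h$ is pluriclosed, and Lemma~\ref{lem:polar} gives $H_h\equiv c<0$. The plan is to invoke Lee--Streets \cite{LS}: a compact pluriclosed manifold with negative holomorphic sectional curvature is K\"ahler with $K_M$ ample. By Aubin--Yau there is then a K\"ahler--Einstein metric $g$, which we normalize by $\Ric_g^{(1)}=c\,\omega_g$, that is, $a=c$ in Lemma~\ref{lem:CL}. The goal is to show $h=g$ and then obtain a contradiction from the K\"ahler symmetries.

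\textbf{Bounding the curvature term.} At a point choose coordinates as in \eqref{eq:diag}, so that $e_i=\lambda_i^{-1/2}\partial_i$ is an $h$-unitary frame. Then
\[
 Q=\sum_{i,k}R^h_{i\bar i k\bar k}=\sum_{i,k}\lambda_i\lambda_k\,R(e_i,\bar e_i,e_k,\bar e_k).
\]
Since $B_h\equiv c$, the definition \eqref{eq:Bdef} with weights $a_i=\lambda_i\ge0$ gives exactly $Q=c\sum_i\lambda_i^2$. As $c<0$ and $\sum_i\lambda_i^2\ge u^2/n$, this yields $-Q\ge(-c)u^2/n$. Substituting into \eqref{eq:CL} with $a=c$ and dropping $|\nabla\partial\mathrm{id}|^2\ge0$ gives
\[
 \Delta_g u\ge\frac{-c}{n}\,u(u-n).
\]
At a maximum point of $u$ this forces $u\le n$. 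By the arithmetic--harmonic mean inequality $uv\ge n^2$, where $v=\tr_hg$, we then get $v\ge n$ everywhere.

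\textbf{Volume ratio.} Let $F=\log(\omega_g^n/\omega_h^n)$. Then $\ii\partial\bp F=\Ric_h^{(1)}-\Ric_g^{(1)}$. Taking $\tr_h$ and using $s_h=nc$ from Lemma~\ref{lem:polar} gives
\[
 \Delta_hF=nc-cv=(-c)(v-n)\ge0.
\]
The operator $\Delta_h$ is elliptic with no zero-order term, so the strong maximum principle on compact $M$ makes $F$ constant. Hence $v\equiv n$ and $u\le n$, so $uv\le n^2$. Equality in $uv\ge n^2$ forces all $\lambda_i$ to be equal to $1$, so $h=g$. I expect the main care to be needed in the identification $Q=c\sum\lambda_i^2$, which must use the full strength of $B_h\equiv c$ with non-uniform weights, and in checking the hypotheses of \cite{LS}.

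\textbf{Contradiction.} Now $h$ is K\"ahler, so $R_{i\bar jk\bar\ell}$ is symmetric under $i\leftrightarrow k$. Take $i\ne k$ in \eqref{eq:polar}, first with $(i,j,k,\ell)=(k,k,i,i)$: this gives $2R_{k\bar ki\bar i}=2c\,\delta_{ki}\delta_{ik}=0$. Next take $(i,k,k,i)$: this gives $2R_{i\bar kk\bar i}=2c$. The K\"ahler symmetry gives $R_{i\bar kk\bar i}=R_{k\bar ki\bar i}$, so $c=0$, which contradicts $c<0$. Here $n\ge2$ is needed to have $i\ne k$. Hence $c=0$.
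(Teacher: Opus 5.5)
Your proposal is correct and follows the paper's proof essentially step for step: pluriclosedness from Lemma~\ref{lem:auto}, then Lee--Streets and Aubin--Yau to produce $g$ with $\Ric_g^{(1)}=c\,\omega_g$. After that come the Chern--Lu bound $u\le n$ from $Q=c\sum_i\lambda_i^2$, the maximum principle for $\log(\omega_g^n/\omega_h^n)$ forcing $h=g$, and the contradiction with the K\"ahler symmetries.

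The one thing to adjust is how you cite Lee--Streets. The paper applies it to a pluriclosed metric with strictly negative \emph{real bisectional} curvature, which is exactly what $B_h\equiv c<0$ provides. Your version assumes only negative holomorphic sectional curvature, which claims more than is needed or used. Compare Theorem~\ref{thm:HSC}, where a K\"ahler background is assumed precisely so that Broder--Stanfield gives ampleness.
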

\begin{proof}
By Proposition~\ref{prop:YZintro}, $c\le0$. Suppose that $c<0$. Lemma~\ref{lem:auto} shows that $h$ is pluriclosed. By the argument of Lee--Streets \cite{LS}, the existence of the pluriclosed metric $h$ with strictly negative real bisectional curvature implies that $M$ is K\"ahler and $K_M$ is ample.

By the Aubin--Yau theorem \cite{Aubin,Yau}, there is a K\"ahler metric $g$, scaled so that
\[
 \Ric_g^{(1)}=c\omega_g.
\]
Set $u=\tr_g h$, $v=\tr_h g$, and $q=\sum_i\lambda_i^2$, where the $\lambda_i$ are the eigenvalues of $h$ with respect to $g$. We first show that $u\le n$. In the coordinates \eqref{eq:diag}, the frame $E_i=\lambda_i^{-1/2}\partial_i$ is $h$-unitary. Hence
\[
 Q=\sum_{i,k}\lambda_i\lambda_k R^h(E_i,\bar E_i,E_k,\bar E_k)
 =c\sum_i\lambda_i^2=cq.
\]
Lemma~\ref{lem:CL} therefore gives
\begin{equation}\label{eq:RBCtrace}
 \Delta_g u=|\nabla\partial\mathrm{id}|^2+cu-cq
 \ge(-c)\left(\frac{u^2}{n}-u\right).
\end{equation}
At a maximum point of $u$, the left-hand side is nonpositive. Since $u>0$, it follows that $u\le n$ everywhere. By Cauchy--Schwarz,
\begin{equation}\label{eq:uv}
 uv=\left(\sum_i\lambda_i\right)\left(\sum_i\lambda_i^{-1}\right)\ge n^2,
 \qquad v\ge n.
\end{equation}

Next, put $F=\log(\omega_g^n/\omega_h^n)$. Since $s_h=nc$ by Lemma~\ref{lem:polar},
\begin{equation}\label{eq:RBCvolume}
 \Delta_h F=s_h-\tr_h\Ric_g^{(1)}=nc-cv=(-c)(v-n)\ge0.
\end{equation}
Here $F$ is globally defined and satisfies
\[
 \ii\partial\bp F=\Ric_h^{(1)}-\Ric_g^{(1)}.
\]
The strong maximum principle makes $F$ constant. Equations~\eqref{eq:RBCvolume} and \eqref{eq:uv} therefore give
\[
 v=n,\qquad n^2\le uv=nu\le n^2,
 \qquad u=n,\qquad
 \lambda_1=\cdots=\lambda_n=1.
\]
The last implication is the equality case of Cauchy--Schwarz applied to $(\sqrt{\lambda_i})_i$ and $(\lambda_i^{-1/2})_i$. Hence $h=g$.

Since $h=g$ is K\"ahler, its pair symmetry and \eqref{eq:polar} imply $R^h_{i\bar j k\bar\ell}=c\delta_{i\ell}\delta_{kj}$. For $i\ne k$, this says
\[
 R^h_{i\bar i k\bar k}=0,\qquad R^h_{k\bar i i\bar k}=c,
\]
which contradicts the K\"ahler symmetry because $c<0$. Hence $c=0$.
\end{proof}

\subsection{The zero case and the torsion Bochner formula}
We assume that $M$ is compact and $B_h=0$. By Proposition~\ref{prop:YZintro}, the metric is balanced and
\begin{equation}\label{eq:Riczero}
 \Ric^{(1)}=\Ric^{(2)}=\Ric^{(3)}=\Ric^{(4)}=0.
\end{equation}
Lemma~\ref{lem:ddbar} and Proposition~\ref{prop:Jac} immediately give
\begin{equation}\label{eq:Jaczero}
 \ii\partial\bp\omega=\Sigma,\qquad \Jac(T)=0.
\end{equation}
We now apply the torsion Bochner formula of Zhou--Zheng \cite{ZZ}.

\begin{lemma}[Zhou--Zheng]\label{lem:derivs}
Let $(M^n,h)$ be a compact Hermitian manifold with $B_h\equiv0$. Then the following identities hold in every unitary frame:
\begin{align}
 T^\ell_{ik,\bar j}&=T^j_{ik,\bar\ell},\label{eq:symT}\\
 \sum_s T^j_{is,\bar s}&=0,\label{eq:divT}\\
 2R_{i\bar j k\bar\ell}
 &=-T^\ell_{ik,\bar j}-\overline{T^k_{j\ell,\bar i}},\label{eq:recover}\\
 \sum_sT^j_{ik,\bar s s}
 &=\sum_{r,s}T^r_{is}T^j_{rk,\bar s},\label{eq:traceT}\\
 \sum_sT^j_{ik,s\bar s}&=\sum_sT^j_{ik,\bar s s}.\label{eq:commtrace}
\end{align}
\end{lemma}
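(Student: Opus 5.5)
We derive all five identities from the polarized curvature identity \eqref{eq:polar} with $c=0$, namely $R_{i\bar jk\bar\ell}=-R_{k\bar\ell i\bar j}$, together with the first Bianchi identity \eqref{eq:Bianchi1}, the vanishing of the Ricci tensors \eqref{eq:Riczero}, the balanced condition $\eta=0$, and the Ricci identity for commuting covariant derivatives.

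We begin with \eqref{eq:symT}. By \eqref{eq:Bianchi1}, $T^\ell_{ik,\bar j}=R_{k\bar j i\bar\ell}-R_{i\bar jk\bar\ell}$. Applying \eqref{eq:pairzero} to both terms turns this into $-R_{i\bar\ell k\bar j}+R_{k\bar\ell i\bar j}$, which is exactly $T^j_{ik,\bar\ell}$ by \eqref{eq:Bianchi1} with $j$ and $\ell$ exchanged. Tracing gives \eqref{eq:divT}: by \eqref{eq:symT}, $\sum_sT^j_{is,\bar s}=\sum_sT^s_{is,\bar j}=-\eta_{i,\bar j}=0$, since $h$ is balanced. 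For \eqref{eq:recover}, write $T^\ell_{ik,\bar j}=R_{k\bar ji\bar\ell}-R_{i\bar jk\bar\ell}$, and use \eqref{eq:herm} together with \eqref{eq:Bianchi1} to get $\overline{T^k_{j\ell,\bar i}}=R_{i\bar\ell k\bar j}-R_{i\bar jk\bar\ell}$. Adding the two expressions gives $-2R_{i\bar jk\bar\ell}+R_{k\bar ji\bar\ell}+R_{i\bar\ell k\bar j}$. The last two terms cancel by \eqref{eq:pairzero} applied to the pair $(k\bar j),(i\bar\ell)$, which yields \eqref{eq:recover}.

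For \eqref{eq:traceT}, differentiate \eqref{eq:symT} in the $s$ direction, set $\ell=s$, and sum. This gives $\sum_sT^j_{ik,\bar ss}=\sum_sT^s_{ik,\bar js}$. To commute the derivatives, we use the Ricci identity for the Chern connection with torsion: $T^s_{ik,\bar js}-T^s_{ik,s\bar j}$ equals curvature terms contracted against $T$. No torsion term appears in this mixed $(1,0)$--$(0,1)$ commutation, because the Chern connection has no mixed torsion. The curvature terms are contractions of $R_{s\bar j\cdot\bar\cdot}$ with $T$. Some of these reduce to Ricci tensors and vanish by \eqref{eq:Riczero}; the others are rewritten through \eqref{eq:recover} or \eqref{eq:Bianchi1} as $T\cdot\nabla''T$. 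The remaining term $\sum_sT^s_{ik,s\bar j}$ is the $\bar j$-derivative of $\sum_sT^s_{ik,s}$. By the first Bianchi identity for the $(2,0)$ part of the curvature, $\sum_sT^s_{ik,s}$ is a quadratic expression in the torsion, namely the trace of the cyclic sum $\sum(T^s_{ik,s}+T^r_{ik}T^s_{rs})$. Differentiating that quadratic expression produces further $T\cdot\nabla''T$ terms. Collecting everything, using \eqref{eq:symT}, \eqref{eq:divT} and $\eta=0$, should leave exactly $\sum_{r,s}T^r_{is}T^j_{rk,\bar s}$.

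The bookkeeping in this last collection is the main obstacle. I would first do it in a frame with vanishing connection form at the point and check carefully which quadratic terms cancel in pairs by \eqref{eq:symT}.

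Finally, \eqref{eq:commtrace} is the Ricci identity traced in $s$. The difference $\sum_s(T^j_{ik,s\bar s}-T^j_{ik,\bar ss})$ is a sum of contractions of the form $\sum_sR_{s\bar s\cdot\bar\cdot}$ with $T$, one for each index of $T$. Each of these is $\Ric^{(2)}$ contracted with $T$, and $\Ric^{(2)}=0$ by \eqref{eq:Riczero}.
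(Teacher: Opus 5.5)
Your derivations of \eqref{eq:symT}, \eqref{eq:divT}, \eqref{eq:recover} and \eqref{eq:commtrace} are correct and coincide with the paper's. The gap is in \eqref{eq:traceT}, and more bookkeeping will not close it.

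On your route the $\Ric^{(4)}$ term drops. Moreover $\sum_sT^s_{ik,s}=0$ exactly, because when $\eta=0$ the trace of the quadratic $(2,0)$-Bianchi term cancels. What remains is
\[
 \sum_sT^j_{ik,\bar ss}=-\sum_{s,b}R_{s\bar j i\bar b}T^s_{bk}-\sum_{s,b}R_{s\bar j k\bar b}T^s_{ib}.
\]
These are not Ricci contractions. Substituting \eqref{eq:recover} does not turn them into $T\cdot\nabla''T$: it also produces conjugate terms $C_{ik}=\sum_{s,b}\overline{T^s_{jb,\bar i}}\,T^s_{bk}$. Using \eqref{eq:Bianchi1} instead leaves curvature behind.

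Set $A_{ik}=\sum_{s,b}T^b_{is,\bar j}T^s_{bk}$ and $B_{ik}=\sum_{s,b}T^b_{is}T^s_{bk,\bar j}$; by \eqref{eq:symT}, $B_{ik}$ is the right-hand side of \eqref{eq:traceT}. Your route then gives
\[
 \sum_sT^j_{ik,\bar ss}=\tfrac12(B_{ik}-A_{ik})+\tfrac12(C_{ik}-C_{ki}).
\]
This is manifestly skew in $i,k$, whereas $B_{ik}$ is not. Equality with $B_{ik}$ is equivalent to $A_{ik}+B_{ik}=C_{ik}-C_{ki}$, that is, to $C$ being symmetric and $\bigl(\sum_{s,b}T^b_{is}T^s_{bk}\bigr)_{,\bar j}=0$.

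This is not a pointwise consequence of \eqref{eq:symT}, \eqref{eq:divT} and $\eta=0$. On $\mathbb C^4$ take $T(e_1,e_2)=e_3$, and let $T^1_{13,\bar1}=T^1_{34,\bar4}=T^4_{34,\bar1}=1$ be the only independent nonzero components of $\nabla''T$. All three conditions hold and $C=0$, yet $A_{12}+B_{12}=1$ for $j=1$.

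The missing ingredient is the second Bianchi identity,
\[
 R_{i\bar jk\bar\ell,m}-R_{m\bar jk\bar\ell,i}=\sum_rT^r_{im}R_{r\bar jk\bar\ell},
\]
which none of your listed tools supplies. The paper uses \eqref{eq:Bianchi1} to write $T^\ell_{mk,\bar ji}-T^\ell_{ik,\bar jm}$ as a difference of $(1,0)$-derivatives of curvature. It applies the second Bianchi identity once directly, and once after using \eqref{eq:pairzero} to bring the antisymmetrized index into the first slot. Then \eqref{eq:pairzero} and \eqref{eq:Bianchi1} give
\[
 T^\ell_{mk,\bar ji}-T^\ell_{ik,\bar jm}=-\sum_rT^r_{im}T^\ell_{rk,\bar j}.
\]
Setting $m=j=s$ and summing, the first term vanishes by \eqref{eq:divT}. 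This yields \eqref{eq:traceT} directly, without any commutation of covariant derivatives.
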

\begin{proof}
These identities follow from the Bianchi calculations in \cite{ZZ}. For completeness, we recall the relevant contractions. The first Bianchi identity \eqref{eq:Bianchi1} and the pair antisymmetry \eqref{eq:pairzero} give \eqref{eq:symT} and \eqref{eq:recover}. Tracing \eqref{eq:symT} gives
\[
 \sum_sT^j_{is,\bar s}=\sum_sT^s_{is,\bar j}=-\eta_{i,\bar j}=0.
\]
The second Bianchi identity together with \eqref{eq:recover} gives
\[
 T^\ell_{mk,\bar j i}-T^\ell_{ik,\bar j m}
 =-\sum_rT^r_{im}T^\ell_{rk,\bar j}.
\]
Setting $m=j=s$ and summing, the first term vanishes by \eqref{eq:divT}; the remaining terms yield \eqref{eq:traceT}. Finally, the mixed commutation formula is
\begin{align*}
 \sum_s(T^j_{ik,\bar s s}-T^j_{ik,s\bar s})
 =\sum_r\bigl(T^r_{ik}\Ric^{(2)}_{r\bar j}
 -T^j_{rk}\Ric^{(2)}_{i\bar r}
 -T^j_{ir}\Ric^{(2)}_{k\bar r}\bigr)=0.
\end{align*}
The mixed torsion is zero, and all three Ricci terms vanish by \eqref{eq:Riczero}. This proves \eqref{eq:commtrace}.
\end{proof}

\begin{lemma}\label{lem:Bochner}
Let $(M^n,h)$ be a compact Hermitian manifold with $B_h\equiv0$. Then
\begin{equation}\label{eq:Bochner}
 \Delta_h|T|^2=|\nabla'T|^2+|\nabla''T|^2
 +\Rea\sum_{i,k,j,s}\Jac^j_{isk,\bar s}\overline{T^j_{ik}}.
\end{equation}
\end{lemma}
\begin{proof}
Put $f=|T|^2$. By metric compatibility,
\[
 f_{,s}=\sum_{i,k,j}\left(T^j_{ik,s}\overline{T^j_{ik}}
 +T^j_{ik}\overline{T^j_{ik,\bar s}}\right).
\]
Differentiating in the direction $\bar e_s$ and summing over $s$, we obtain
\begin{align*}
 \Delta_h f=\sum_{i,k,j,s}\bigl(&T^j_{ik,s\bar s}\overline{T^j_{ik}}
 +|T^j_{ik,s}|^2+|T^j_{ik,\bar s}|^2
 +T^j_{ik}\overline{T^j_{ik,\bar s s}}\bigr).
\end{align*}
By \eqref{eq:commtrace}, the two contracted second-derivative terms are complex conjugates. It follows that
\begin{align}
 \Delta_h|T|^2
 &=|\nabla'T|^2+|\nabla''T|^2
 +2\Rea\sum_{i,k,j,s}T^j_{ik,\bar s s}\overline{T^j_{ik}}\notag\\
 &=|\nabla'T|^2+|\nabla''T|^2+\mathcal P,\label{eq:BochnerZZ}
\end{align}
where, by \eqref{eq:traceT},
\[
 \mathcal P=2\Rea\sum_{i,k,j,r,s}T^r_{is}T^j_{rk,\bar s}\overline{T^j_{ik}}.
\]
This is the torsion Bochner formula obtained in \cite{ZZ}.

Put $D^j_{ik}=\sum_{r,s}T^r_{is}T^j_{rk,\bar s}$. By \eqref{eq:divT},
$D^j_{ik}=\sum_{r,s}(T^r_{is}T^j_{rk})_{,\bar s}$. We claim that
\begin{equation}\label{eq:Jacdiv}
 \sum_s\Jac^j_{isk,\bar s}=D^j_{ik}-D^j_{ki}.
\end{equation}
By \eqref{eq:divT}, the first product in \eqref{eq:Jacdef} satisfies
\[
 \sum_{r,s}(T^r_{is}T^j_{rk})_{,\bar s}
 =\sum_r\left(\sum_sT^r_{is,\bar s}\right)T^j_{rk}
 +\sum_{r,s}T^r_{is}T^j_{rk,\bar s}=D^j_{ik}.
\]
Similarly, the second product contributes $-D^j_{ki}$, since $T^r_{sk}=-T^r_{ks}$. For the third product, we have
\[
 \sum_{r,s}(T^r_{ki}T^j_{rs})_{,\bar s}
 =\sum_{r,s}T^r_{ki,\bar s}T^j_{rs}
 +\sum_rT^r_{ki}\left(\sum_sT^j_{rs,\bar s}\right)=0.
\]
Indeed, the last sum vanishes by \eqref{eq:divT}. The first sum vanishes by \eqref{eq:symT}, because $T^r_{ki,\bar s}$ is symmetric in $r,s$ and $T^j_{rs}$ is skew-symmetric. This proves \eqref{eq:Jacdiv}. Contracting with $\overline{T^j_{ik}}$ and interchanging $i,k$ in the second term, we obtain
\[
 \mathcal P=\Rea\sum_{i,k,j,s}\Jac^j_{isk,\bar s}\overline{T^j_{ik}},
\]
which proves \eqref{eq:Bochner}.
\end{proof}

\begin{proof}[Proof of Theorem~\ref{thm:RBC}]
By Proposition~\ref{prop:c0}, $c=0$. Equation~\eqref{eq:Jaczero} implies that $\Jac(T)=0$ and hence $\nabla\Jac(T)=0$. It follows from Lemma~\ref{lem:Bochner} that
\[
 \Delta_h|T|^2=|\nabla'T|^2+|\nabla''T|^2\ge0.
\]
The strong maximum principle gives $|T|^2=\mathrm{const}$. Hence $\nabla'T=\nabla''T=0$, and \eqref{eq:recover} yields $R=0$. This completes the proof.
\end{proof}

\begin{remark}
On a balanced threefold the Jacobi identity follows directly from the trace condition, as in Lemma~\ref{lem:LR}. In higher dimensions, Proposition~\ref{prop:Jac} uses in addition the identity $\ii\partial\bp\omega=\Sigma$ and compactness.
\end{remark}

\subsection{Additional special metrics}
In this section, we give the K\"ahler-flatness consequences of the torsion identity. 
\begin{corollary}\label{cor:special}
Let $(M^n,h)$ be compact, with $n\ge2$ and constant real bisectional curvature. Then $h$ is K\"ahler flat if any of the following holds:
\begin{enumerate}
 \item $M$ admits a pluriclosed Hermitian metric;
 \item $M$ admits an astheno-K\"ahler Hermitian metric;
\end{enumerate}
\end{corollary}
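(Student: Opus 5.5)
By Theorem~\ref{thm:RBC}, $c=0$, $R\equiv0$ and $\nabla T\equiv0$. By Proposition~\ref{prop:YZintro}, $h$ is balanced. By \eqref{eq:Jaczero}, $\ii\partial\bp\omega_h=\Sigma$, where $\Sigma=\sum_a\tau^a\wedge\bar\tau^a$. So it suffices to show $T=0$. Since $\nabla T=0$, the norm $|T|^2$ is a constant, and the plan is to show that this constant vanishes by pairing $\Sigma$ against a suitable closed positive form built from the auxiliary metric. The key observation is that $\Sigma$ is a weakly positive $(2,2)$-form. For any strictly positive $(n-2,n-2)$-form $\Theta$, the form $\Sigma\wedge\Theta$ is a nonnegative top form, and it vanishes only where all $\tau^a$ vanish.

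\emph{Case (2).} Let $\alpha$ be astheno-K\"ahler, so $\partial\bp\alpha^{n-2}=0$. Set $\Theta=\alpha^{n-2}$, which is strictly positive. Then
\[
 \int_M\Sigma\wedge\alpha^{n-2}=\int_M\ii\partial\bp\omega_h\wedge\alpha^{n-2}
 =\int_M\omega_h\wedge\ii\partial\bp\alpha^{n-2}=0,
\]
by integrating by parts twice (Stokes, no boundary). Pointwise positivity then forces $\tau^a=0$, so $T=0$. To make positivity concrete, I would diagonalize $\alpha$ with respect to $h$ at a point. Then $\Sigma\wedge\alpha^{n-2}$ is a positive combination of $|T^a_{ik}|^2$ with coefficients equal to products of eigenvalues, all positive. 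For $n=2$ the condition is vacuous, since $\alpha^0=1$ and $\int_M\ii\partial\bp\omega_h=0$ by Stokes; the same argument works.

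\emph{Case (1).} Let $\beta$ be pluriclosed. I would use the balanced condition of $h$ rather than the torsion of $\beta$. Write $\Sigma\wedge\beta\wedge\omega_h^{n-3}$, which is again a nonnegative multiple of $\sum|T^a_{ik}|^2$ with positive weights. Then
\[
 \ii\partial\bp\omega_h\wedge\beta\wedge\omega_h^{n-3}
 =\tfrac{1}{n-2}\,\ii\partial\bp(\omega_h^{n-2})\wedge\beta
 -(n-3)\,\ii\partial\omega_h\wedge\bp\omega_h\wedge\omega_h^{n-4}\wedge\beta,
\]
using \eqref{eq:power}. The first term integrates to $\int\omega_h^{n-2}\wedge\ii\partial\bp\beta=0$.

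The second term is the main obstacle, since it has no evident sign. As a fallback, I would use that $h$ is balanced and $\beta$ is pluriclosed. A standard result (Alexandrov--Ivanov/Fino--Vezzoni type) says a compact manifold admitting both a balanced and a pluriclosed metric is often K\"ahler. More robustly, pair $\ii\partial\bp\beta$ with $\omega_h^{n-1}$ to obtain
\[
 0=\int_M\ii\partial\bp\beta\wedge\omega_h^{n-2}\wedge\omega_h
 \;\rightsquigarrow\;\int_M\beta\wedge\ii\partial\bp(\omega_h^{n-1})=0,
\]
which is trivial. So instead I would compute $\int\beta\wedge\ii\partial\bp\omega_h^{n-2}$ directly from \eqref{eq:power} with $m=n-2$ and \eqref{eq:dw}.

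Since $\nabla T=0$ and $\eta=0$, the form $\ii\partial\omega_h\wedge\bp\omega_h\wedge\omega_h^{n-4}$ can be rewritten algebraically in terms of $T$. I would then show it equals $\Sigma\wedge\omega_h^{n-3}$ minus a nonnegative term (Lin--Ren type algebra, as in Lemma~\ref{lem:LR}). This yields
\[
 0=\int\beta\wedge\ii\partial\bp\omega_h^{n-2}\ge c_n\int|T|^2_{\beta}\,\dV .
\]
If that algebra fails, I would reduce to Case (2), since by Theorem~\ref{thm:RBC} the manifold is a Chern-flat quotient of a complex Lie group, and average $\beta$ to a left-invariant pluriclosed metric. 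On a unimodular complex Lie group a left-invariant pluriclosed metric forces the group to be abelian, so $T=0$.

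In either case $T=0$ together with $R=0$ gives that $h$ is K\"ahler flat.
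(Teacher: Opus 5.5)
Your reduction to $T=0$ and your Case~(2) argument coincide with the paper's. In Case~(1), your identity from \eqref{eq:power} already settles $n=3$: the coefficient $n-3$ kills the second term, so $\int_M\Sigma\wedge\beta=\int_M\omega_h\wedge\ii\partial\bp\beta=0$. The gap is Case~(1) for $n\ge4$. The term $\ii\partial\omega_h\wedge\bp\omega_h\wedge\omega_h^{n-4}\wedge\beta$ has no pointwise sign, and the repair you propose would not help even if proved. Suppose $\ii\partial\omega_h\wedge\bp\omega_h\wedge\omega_h^{n-4}=\Sigma\wedge\omega_h^{n-3}-P$ with $P\ge0$. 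Then \eqref{eq:power} gives
\[
 \ii\partial\bp\omega_h^{n-2}\wedge\beta=(n-2)^2\,\Sigma\wedge\omega_h^{n-3}\wedge\beta-(n-2)(n-3)\,P\wedge\beta,
\]
which is an upper bound, not the lower bound $\ge c_n|T|^2\,\dV$ you want.

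What is missing is an \emph{exact} pointwise evaluation, which is the paper's Lemma~\ref{lem:pairing}. For balanced $h$ with $\ii\partial\bp\omega=\Sigma$ and any real $(1,1)$-form $\alpha=\ii a_{p\bar q}\ph^p\wedge\bar\ph^q$, it states
\[
 \ii\partial\bp\omega^{n-2}\wedge\alpha=\frac{1}{n(n-1)}\sum_{i<j}\sum_{p,q}a_{p\bar q}T^p_{ij}\overline{T^q_{ij}}\,\omega^n .
\]
The proof runs as follows.
By linearity it suffices to take $\alpha=\ii\ph^n\wedge\bar\ph^n$, i.e.\ to restrict to $W=e_n^\perp$ with $m=n-1$ and $\theta^p=\tau^p|_W$. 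Split $\zeta=\partial\omega|_W$ into its primitive part plus $\omega|_W\wedge\xi$. This gives $\ii\zeta\wedge\bar\zeta\wedge(\omega|_W)^{m-3}=\frac{|\Lambda\zeta|^2-|\zeta|^2}{m(m-1)(m-2)}(\omega|_W)^m$. Since $|\zeta|^2=\sum_{p\le m}|\theta^p|^2$, the term $-|\zeta|^2$ cancels every $|\theta^p|^2$ coming from $\Sigma$ except $|\theta^n|^2$. Balancedness, $\sum_{k<n}T^k_{ki}=-T^n_{ni}$, converts $|\Lambda\zeta|^2$ into $\sum_{i<n}|T^n_{ni}|^2$. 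What remains is exactly $\frac1m\sum_{i<j}|T^n_{ij}|^2$.

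So the ingredients are the Lefschetz decomposition and $\eta=0$; neither $\nabla T=0$ nor the Lin--Ren identity enters. The positive quantity weights the \emph{output} index of $T$ by the auxiliary metric. Comparing with $\Sigma\wedge\beta\wedge\omega_h^{n-3}$, which weights the input indices, is therefore not the right bookkeeping.

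Your fallbacks do not fill the gap either. That a compact manifold carrying both a balanced and a pluriclosed metric is K\"ahler is the Fino--Vezzoni conjecture, known only in special cases, so ``often'' is not an argument.

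The Lie-group route is nearer to a genuine alternative. For a left-invariant $\tilde\beta=\ii b_{i\bar j}\ph^i\wedge\bar\ph^j$ in a left-invariant holomorphic coframe, one has $\ii\partial\bp\tilde\beta=\sum b_{i\bar j}\,d\ph^i\wedge\overline{d\ph^j}$. This vanishes only if every $d\ph^i=0$, so that step is easy, but it should be written out. However, Boothby's theorem only identifies the universal cover with a complex Lie group $G$. The manifold $M$ itself need not be $\Gamma\backslash G$ with $h$ left-invariant: a bielliptic surface with its flat K\"ahler metric is Chern flat, has nontrivial holonomy, and is not parallelizable. Before symmetrizing, you would have to show that the Chern holonomy is finite and pass to the corresponding finite cover, which you do not address.
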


We first prove the pairing formula needed for (1).
\begin{lemma}\label{lem:pairing}
Suppose $n\ge3$, $h$ is balanced, and $\ii\partial\bp\omega=\Sigma$. For any real $(1,1)$-form $\alpha=\ii a_{p\bar q}\ph^p\wedge\bar\ph^q$,
\begin{equation}\label{eq:pairing}
 \ii\partial\bp\omega^{n-2}\wedge\alpha
 =\frac{1}{n(n-1)}\sum_{i<j}\sum_{p,q}
 a_{p\bar q}T^p_{ij}\overline{T^q_{ij}}\,\omega^n.
\end{equation}
\end{lemma}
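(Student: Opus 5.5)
Expand $\partial\bp\omega^{n-2}$ with the product formula \eqref{eq:power} for $m=n-2$:
\[
 \ii\partial\bp\omega^{n-2}=(n-2)\,\ii\partial\bp\omega\wedge\omega^{n-3}+(n-2)(n-3)\,\ii\partial\omega\wedge\bp\omega\wedge\omega^{n-4}.
\]
We then substitute $\ii\partial\bp\omega=\Sigma$ and, by \eqref{eq:dw}, $\partial\omega=\ii\sum_a\tau^a\wedge\bar\ph^a$ and $\bp\omega=-\ii\sum_b\ph^b\wedge\bar\tau^b$. Both terms become explicit $(n-1,n-1)$-forms quadratic in $T$, and wedging with $\alpha$ gives a top form. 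Everything is pointwise, so we work at a point in a unitary frame and compute the coefficient of $\omega^n$.

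For the first term, write $\Sigma\wedge\alpha\wedge\omega^{n-3}$. Using $\omega^{n-3}/(n-3)!=\sum_{|K|=n-3}\ii^{n-3}\ph^K\wedge\bar\ph^K$ (with the usual sign normalization), only the monomials of $\tau^a\wedge\bar\tau^a\wedge\ph^p\wedge\bar\ph^q$ that complete the missing three indices contribute. Equivalently, apply \eqref{eq:Hodge}-type contractions: the coefficient is a sum of a trace term $\tr_h\alpha\cdot|T|^2$-type quantity minus a contraction $\sum a_{p\bar q}\,(\iota_{\bar q}\iota_p\text{ of }\tau^a\wedge\bar\tau^a)$. Concretely we expect
\[
 \Sigma\wedge\alpha\wedge\frac{\omega^{n-3}}{(n-3)!}=\Bigl(\tfrac12\tr_h\alpha\,|T|^2-\sum_{a,i,p,q}a_{p\bar q}T^a_{pi}\overline{T^a_{qi}}\Bigr)\frac{\omega^n}{n!},
\]
up to signs that we will fix by checking the case $\alpha=\omega$, where the left-hand side is $\Sigma\wedge\omega^{n-2}/(n-3)!$ and \eqref{eq:Hodge} with $r=2$ gives $\sum_a|\tau^a|^2$ times a known multiple. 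The second term similarly produces contractions of the forms $\sum_a a_{p\bar q}T^{p}\cdots$ and of $\eta$. The key point is that every term containing $\eta_i=\sum_rT^r_{ri}$ drops out because $h$ is balanced.

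The main obstacle is the bookkeeping showing that, after using $\eta=0$, the terms $\tr_h\alpha\,|T|^2$ and $a_{p\bar q}T^a_{pi}\overline{T^a_{qi}}$ coming from the two pieces cancel exactly with the coefficients $(n-2)$ and $(n-2)(n-3)$. This leaves only the contraction over the output index, $\sum_{i<j}a_{p\bar q}T^p_{ij}\overline{T^q_{ij}}$, with the factor $(n-2)!/n!=1/(n(n-1))$. To control this, we first reduce to diagonal $\alpha$ by linearity and unitary invariance, and in fact to $\alpha=\ii\ph^p\wedge\bar\ph^p$. It then suffices to evaluate each piece on $\ii\ph^p\wedge\bar\ph^p\wedge(\text{forms})$, which amounts to restricting to the complementary hyperplane. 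Then $\partial\omega\wedge\bp\omega$ restricted to the complement of $e_p$ is handled by \eqref{eq:Hodge} with $r=3$, giving $|\sum_a\tau^a\wedge\bar\ph^a|^2$-type norms there. Finally, we verify the identity on the test tensor $T(e_1,e_2)=e_3$ to pin down the constants.
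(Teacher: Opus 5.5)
\textbf{Where the proposal falls short.} Your outline follows the paper's route: expand by \eqref{eq:power}, reduce to $\alpha=\ii\ph^n\wedge\bar\ph^n$, and restrict to $W=\operatorname{span}\{e_1,\dots,e_{n-1}\}$. Your expected formula for $\Sigma\wedge\alpha\wedge\omega^{n-3}/(n-3)!$ is correct; for this $\alpha$ the coefficient of $\omega^n/n!$ is $\sum_a\sum_{i<k<n}|T^a_{ik}|^2$. The gap is the second piece, which carries the real content of the lemma. Put $m=n-1$, $\beta=\omega|_W$, $\theta^a=\tau^a|_W$ and $\zeta=(\partial\omega)|_W=\ii\sum_{a<n}\theta^a\wedge\bar\ph^a$. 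You must evaluate $\ii\zeta\wedge\bar\zeta\wedge\beta^{m-3}$ for a $(2,1)$-form $\zeta$. Here \eqref{eq:Hodge} does not apply directly, and the result is not a norm. What is needed is
\[
 \ii\zeta\wedge\bar\zeta\wedge\beta^{m-3}=\frac{|\Lambda\zeta|^2-|\zeta|^2}{m(m-1)(m-2)}\,\beta^m ,
\]
with $\Lambda$ the adjoint of $\beta\wedge$. The paper gets this from the primitive decomposition. You could instead use \eqref{eq:Hodge} with $r=3$, after rewriting $\ii\zeta\wedge\bar\zeta=-\ii\sum_{a,b}(\ph^b\wedge\theta^a)\wedge\overline{\ph^a\wedge\theta^b}$ and using $\iota_b\eps_a+\eps_a\iota_b=\delta_{ab}$.

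\textbf{The partial-trace term.} The decisive point is that $|\Lambda\zeta|^2=\sum_{i<n}\bigl|\sum_{k<n}T^k_{ki}\bigr|^2$ involves only a \emph{partial} trace. Balancedness does not kill it; it gives $\sum_{k<n}T^k_{ki}=-T^n_{ni}$, so $|\Lambda\zeta|^2=\sum_{i<n}|T^n_{ni}|^2$. This is exactly the part of the right-hand side of \eqref{eq:pairing} with one lower index equal to $n$. The rest, $\sum_{i<j<n}|T^n_{ij}|^2$, comes from $\sum_{a\le n}|\theta^a|^2-|\zeta|^2$. In a fully global computation with $n\ge4$ the $\eta$-terms do drop out, but you have not carried that computation out. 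In the restricted computation you actually describe, your rule that every trace term drops by balancedness would discard this contribution. Your test tensor $T(e_1,e_2)=e_3$ cannot detect the loss, because it has $T^p_{pi}=0$ for all $p,i$.

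\textbf{The case $n=3$.} Here $(n-2)(n-3)=0$, so there is no second piece for the $\tr_h\alpha\,|T|^2$ and $\sum a_{p\bar q}T^a_{pi}\overline{T^a_{qi}}$ terms to cancel against. Your stated mechanism therefore cannot work. The lemma then says the $\Sigma$-piece alone equals the right-hand side. For $\alpha=\ii\ph^3\wedge\bar\ph^3$ this reads $\sum_p|T^p_{12}|^2=\sum_{i<j}|T^3_{ij}|^2$. That holds only because $\eta=0$ forces $T^1_{12}=T^3_{23}$ and $T^2_{12}=-T^3_{13}$, which is why the paper checks $n=3$ separately. More generally, fixing constants on a test tensor is legitimate only after showing that the answer is a combination of a known finite list of contractions that your tests separate. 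You have not given that argument, and for $n=3$ the contractions you list are in fact linearly dependent on $\{\eta=0\}$.
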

\begin{proof}
Both sides are linear in $\alpha$ and invariant under unitary changes of frame. It suffices to take $\alpha=\ii\ph^n\wedge\bar\ph^n$. First assume $n\ge4$. Restrict to $W=\operatorname{span}_{\mathbb C}\{e_1,\ldots,e_{n-1}\}$, put $m=n-1$, $\beta=\omega|_W$, and $\theta^p=\tau^p|_W$. If $\zeta=(\partial\omega)|_W$, then
\[
 \zeta=\ii\sum_{p=1}^{m}\theta^p\wedge\bar\ph^p.
\]
The unitary linear-algebra formulas, for a $(2,0)$-form $\theta$ and a $(2,1)$-form $\psi$ on $W$, are
\begin{align}
 \theta\wedge\bar\theta\wedge\beta^{m-2}
 &=\frac{|\theta|^2}{m(m-1)}\beta^m,\label{eq:20pair}\\
 \ii\psi\wedge\bar\psi\wedge\beta^{m-3}
 &=\frac{|\Lambda\psi|^2-|\psi|^2}{m(m-1)(m-2)}\beta^m.\label{eq:21pair}
\end{align}
Here $\Lambda$ is the adjoint of exterior multiplication by $\beta$. To check \eqref{eq:21pair}, write $\psi=\psi_0+\beta\wedge\xi$, with $\Lambda\psi_0=0$. Then
\[
 |\psi|^2=|\psi_0|^2+(m-1)|\xi|^2,
 \qquad |\Lambda\psi|^2=(m-1)^2|\xi|^2.
\]
The two contributions to the wedge pairing are
\begin{align*}
 \ii\psi_0\wedge\bar\psi_0\wedge\beta^{m-3}
 &=-\frac{|\psi_0|^2}{m(m-1)(m-2)}\beta^m,\\
 \ii(\beta\wedge\xi)\wedge\overline{\beta\wedge\xi}\wedge\beta^{m-3}
 &=\ii\xi\wedge\bar\xi\wedge\beta^{m-1}
 =\frac{|\xi|^2}{m}\beta^m.
\end{align*}
The first sign can be checked on the primitive monomial $\ph^1\wedge\ph^2\wedge\bar\ph^3$; the coefficient is $(m-3)!/m!$. The mixed terms vanish because primitivity gives $\psi_0\wedge\beta^{m-2}=0$. Combining the two contributions with the displayed norm identities proves \eqref{eq:21pair}.

By \eqref{eq:power}, \eqref{eq:20pair}, and \eqref{eq:21pair},
\begin{align*}
 \left.\ii\partial\bp\omega^{n-2}\right|_W
 &=(m-1)\left(\sum_{p=1}^n\theta^p\wedge\bar\theta^p\wedge\beta^{m-2}
 +(m-2)\ii\zeta\wedge\bar\zeta\wedge\beta^{m-3}\right)\\
 &=\frac1m\left(\sum_{p=1}^n|\theta^p|^2-|\zeta|^2+|\Lambda\zeta|^2\right)\beta^m.
\end{align*}
Direct contraction yields
\[
 |\zeta|^2=\sum_{p=1}^m|\theta^p|^2,
 \qquad |\Lambda\zeta|^2=\sum_{i=1}^m\left|\sum_{k=1}^mT^k_{ki}\right|^2.
\]
Balancedness gives $\sum_{k=1}^mT^k_{ki}=-T^n_{ni}$. Thus the last displayed coefficient becomes
\[
 \frac1m\left(\sum_{i<j<n}|T^n_{ij}|^2+\sum_{i<n}|T^n_{ni}|^2\right)
 =\frac1m\sum_{i<j\le n}|T^n_{ij}|^2.
\]
Since $\beta^{n-1}\wedge\ii\ph^n\wedge\bar\ph^n=\omega^n/n$, this proves \eqref{eq:pairing}.

For $n=3$, wedge $\Sigma$ directly with $\ii\ph^3\wedge\bar\ph^3$. Its coefficient is $\sum_p|T^p_{12}|^2$. The trace condition gives $T^1_{12}=T^3_{23}$ and $T^2_{12}=-T^3_{13}$, so this coefficient is $\sum_{i<j}|T^3_{ij}|^2$. The factor is $1/6$, proving the same formula.
\end{proof}

\begin{proof}[Proof of Corollary~\ref{cor:special}]
By Theorem~\ref{thm:RBC}, $B_h=0$ and $R=0$. In particular $h$ is balanced and $\ii\partial\bp\omega=\Sigma$. It remains to show that $T=0$.

Suppose first that $\alpha$ is an astheno-K\"ahler form. Integration by parts gives
\[
 0=\int_M\omega\wedge\ii\partial\bp\alpha^{n-2}
 =\int_M\ii\partial\bp\omega\wedge\alpha^{n-2}
 =\int_M\sum_a\tau^a\wedge\bar\tau^a\wedge\alpha^{n-2}.
\]
For any $(2,0)$-form $\xi$,
\[
 \xi\wedge\bar\xi\wedge\alpha^{n-2}
 =\frac{|\xi|_\alpha^2}{n(n-1)}\alpha^n.
\]
The integrand is nonnegative, and vanishes precisely when $T=0$. This also proves the assertion for $n=2$, since $\alpha^{n-2}=1$.

If $n\ge3$ and $\alpha$ is pluriclosed, use Lemma~\ref{lem:pairing} instead:
\begin{align*}
 0&=\int_M\omega^{n-2}\wedge\ii\partial\bp\alpha
 =\int_M\ii\partial\bp\omega^{n-2}\wedge\alpha\\
 &=\frac{1}{n(n-1)}\int_M\sum_{i<j}
 \alpha\bigl(T(e_i,e_j),\overline{T(e_i,e_j)}\bigr)\,\omega^n.
\end{align*}
In the last line, $\alpha(\cdot,\overline{\cdot})$ denotes the associated Hermitian inner product. Its positivity forces $T=0$.

Thus $h$ is K\"ahler flat in either case. The finite torus cover follows from the structure theorem for compact flat K\"ahler manifolds.
\end{proof}

\begin{remark}
The K\"ahler conclusion in Corollary~\ref{cor:special} requires the additional hypothesis. Indeed, on the Iwasawa manifold there is a global holomorphic coframe with $d\ph^1=d\ph^2=0$ and $d\ph^3=\ph^1\wedge\ph^2$. The metric $\omega=\ii\sum_a\ph^a\wedge\bar\ph^a$ has zero Chern connection matrix in this frame, but $d\omega\ne0$.
\end{remark}

\section{Pluriclosed metrics with constant holomorphic sectional curvature}\label{sec:HSC}

\subsection{The curvature contraction}
We begin with the identity which replaces \eqref{eq:polar}.
\begin{lemma}\label{lem:HSCalgebra}
Let $h$ be a pluriclosed Hermitian metric with $H_h\equiv c$. Then
\begin{equation}\label{eq:HSCpair}
 R^h_{i\bar j k\bar\ell}+R^h_{k\bar\ell i\bar j}
 =c(h_{i\bar j}h_{k\bar\ell}+h_{i\bar\ell}h_{k\bar j})
 +\frac12h_{p\bar q}T^p_{ik}\overline{T^q_{j\ell}}.
\end{equation}
Consequently,
\begin{equation}\label{eq:scalarHSC}
 s_h=\frac{n(n+1)c}{2}+\frac14|T|_h^2.
\end{equation}
\end{lemma}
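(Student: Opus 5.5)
The plan is to polarize $H_h\equiv c$, as in Lemma~\ref{lem:polar}, and then use the pluriclosed condition through the four-term identity \eqref{eq:fourterm} to recover the missing curvature symmetry.

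\textbf{Step 1: polarization.} Fix a point and a unitary frame, and put $S_{i\bar jk\bar\ell}=R_{i\bar jk\bar\ell}+R_{k\bar\ell i\bar j}$. The quartic form $v\mapsto R(v,\bar v,v,\bar v)-c|v|^4$ vanishes identically. Polarizing it in $v$ and, separately, in $\bar v$, only the total symmetrization of $R_{i\bar jk\bar\ell}$ in $(i,k)$ and in $(j,\ell)$ is seen. We obtain
\[
 S_{i\bar jk\bar\ell}+S_{i\bar\ell k\bar j}
 =2c(\delta_{ij}\delta_{k\ell}+\delta_{i\ell}\delta_{kj}).
\]
Indeed, $R_{i\bar jk\bar\ell}+R_{k\bar ji\bar\ell}+R_{i\bar\ell k\bar j}+R_{k\bar\ell i\bar j}$ equals $S_{i\bar jk\bar\ell}+S_{i\bar\ell k\bar j}$. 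The constant is fixed by the case $i=j=k=\ell$.

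\textbf{Step 2: the antisymmetric part.} Pluriclosedness means $D_{i\bar jk\bar\ell}=0$ in \eqref{eq:fourterm}, so
\[
 C_{i\bar jk\bar\ell}=S_{i\bar jk\bar\ell}-S_{i\bar\ell k\bar j}=T^p_{ik}\overline{T^p_{j\ell}}
\]
in a unitary frame at the point. Adding this to Step 1 and halving gives
\[
 S_{i\bar jk\bar\ell}=c(\delta_{ij}\delta_{k\ell}+\delta_{i\ell}\delta_{kj})+\tfrac12T^p_{ik}\overline{T^p_{j\ell}}.
\]
Rewritten tensorially, this is \eqref{eq:HSCpair}. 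The two identities are compatible, since the torsion term is antisymmetric under $j\leftrightarrow\ell$.

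\textbf{Step 3: the scalar curvature.} Take $j=i$, $\ell=k$ and sum. The left side gives $2s_h$. The $c$-term gives $c(n^2+n)$. The torsion term gives $\tfrac12\sum_{i,k,p}|T^p_{ik}|^2=\tfrac12|T|^2$. Dividing by $2$ yields \eqref{eq:scalarHSC}.

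The main obstacle is the sign and index bookkeeping. One must check that \eqref{eq:fourterm}, with $D=0$, gives exactly $C=T\bar T$ in the index order above. One must also check that the polarization in Step 1 pairs $S_{i\bar jk\bar\ell}$ with $S_{i\bar\ell k\bar j}$, not with some other combination. I would verify both on a single component, for instance $i=k$ forcing $T=0$ and giving $S_{i\bar ji\bar\ell}=c(\delta_{ij}\delta_{i\ell}+\delta_{i\ell}\delta_{ij})$, which is consistent.
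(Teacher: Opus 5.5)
Your proposal is correct and is essentially the paper's proof. Polarizing $H_h\equiv c$ gives $S_{i\bar jk\bar\ell}+S_{i\bar\ell k\bar j}=2c(\delta_{ij}\delta_{k\ell}+\delta_{i\ell}\delta_{kj})$, the pluriclosed case $D=0$ of the four-term identity gives $S_{i\bar jk\bar\ell}-S_{i\bar\ell k\bar j}=\sum_pT^p_{ik}\overline{T^p_{j\ell}}$, and adding, halving and tracing with $j=i$, $\ell=k$ yields both formulas exactly as in the paper, with your index bookkeeping correct throughout.
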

\begin{proof}
Polarization of $H_h\equiv c$ gives
\begin{align}
 R^h_{i\bar j k\bar\ell}+R^h_{k\bar j i\bar\ell}
 +R^h_{i\bar\ell k\bar j}+R^h_{k\bar\ell i\bar j}
 =2c(h_{i\bar j}h_{k\bar\ell}+h_{i\bar\ell}h_{k\bar j}).\label{eq:HSCpolar}
\end{align}
Indeed, let $\widehat R$ be one fourth of the left-hand side. It is symmetric in the holomorphic indices and in the antiholomorphic indices, and $\widehat R(v,\bar v,v,\bar v)=c|v|_h^4$. The tensor
\[
 \frac c2(h_{i\bar j}h_{k\bar\ell}+h_{i\bar\ell}h_{k\bar j})
\]
has the same symmetries and diagonal values. Comparing the coefficients of $v^iv^k\bar v^j\bar v^\ell$ gives \eqref{eq:HSCpolar}.

Since $h$ is pluriclosed, \eqref{eq:fourterm} becomes
\begin{equation}\label{eq:pluricurv}
 R^h_{i\bar j k\bar\ell}+R^h_{k\bar\ell i\bar j}
 -R^h_{k\bar j i\bar\ell}-R^h_{i\bar\ell k\bar j}
 =h_{p\bar q}T^p_{ik}\overline{T^q_{j\ell}}.
\end{equation}
This is the pluriclosed curvature identity in \cite[Lemma 2.1]{RZ}. Adding \eqref{eq:HSCpolar} and \eqref{eq:pluricurv} gives \eqref{eq:HSCpair}. Setting $j=i$, $\ell=k$ in an $h$-unitary frame and summing, we obtain \eqref{eq:scalarHSC}:
\[
 2s_h=c(n^2+n)+\frac12|T|_h^2.\qedhere
\]
\end{proof}

\subsection{Comparison with a K\"ahler--Einstein metric}
\begin{proof}[Proof of Theorem~\ref{thm:HSC}]
Since $X$ admits a K\"ahler metric, $h$ is pluriclosed, and $H_h<0$, the ampleness theorem of Broder--Stanfield \cite{BS} shows that $K_X$ is ample. By Aubin--Yau \cite{Aubin,Yau}, choose a K\"ahler--Einstein metric $g$ normalized by
\begin{equation}\label{eq:KEHSC}
 \Ric_g^{(1)}=a\omega_g,\qquad a=\frac{n+1}{2}c<0.
\end{equation}
The normalization follows by scaling a metric $g_0$ satisfying $\Ric_{g_0}^{(1)}=-\omega_{g_0}$ by $(-a)^{-1}$, since the first Ricci form is unchanged by constant scaling.

Put $u=\tr_g h$ and $v=\tr_h g$. At a point choose coordinates as in \eqref{eq:diag} and set
\[
 q=\sum_i\lambda_i^2,
 \qquad \mathcal T=\sum_{i,k}|T(\partial_i,\partial_k)|_h^2
 =\sum_{i,k,p}\lambda_p|T^p_{ik}|^2.
\]
The inputs in $\mathcal T$ are contracted by $g$, and the output by $h$. The full $h$-norm is
\[
 |T|_h^2=\sum_{i,k,p}\frac{\lambda_p}{\lambda_i\lambda_k}|T^p_{ik}|^2.
\]
By interchanging $i,k$, the two curvature terms in the contraction of \eqref{eq:HSCpair} have the same sum. Since
\[
 \sum_{i,k}h_{i\bar i}h_{k\bar k}=u^2,
 \qquad \sum_{i,k}h_{i\bar k}h_{k\bar i}=q,
\]
the contraction of \eqref{eq:HSCpair} gives
\begin{equation}\label{eq:QHSC}
 Q=\sum_{i,k}R^h_{i\bar i k\bar k}
 =\frac c2(u^2+q)+\frac14\mathcal T.
\end{equation}

We use the symmetric and skew-symmetric decomposition of the covariant Hessian, as in the Schwarz lemma argument of Broder--Stanfield \cite{BS}. Here the source metric $g$ is K\"ahler and the map is the identity. Let $A=\nabla\partial\mathrm{id}=\nabla^h-\nabla^g$ and let $S$ be its symmetric part in the two input indices. We use for $S$ the same induced norm as in Lemma~\ref{lem:CL}:
\[
 S^p_{ik}=\frac12(A^p_{ik}+A^p_{ki}).
\]
Because $g$ is torsion free, $A^p_{ik}-A^p_{ki}=T^p_{ik}$. Hence
\[
 A=S+\frac12T,\qquad
 |\nabla\partial\mathrm{id}|^2=|S|^2+\frac14\mathcal T.
\]
Indeed, in the coordinates \eqref{eq:diag},
\[
 \sum_{i,k,p}\lambda_p S^p_{ik}\overline{T^p_{ik}}
 =\sum_{i,k,p}\lambda_p S^p_{ki}\overline{T^p_{ki}}
 =-\sum_{i,k,p}\lambda_p S^p_{ik}\overline{T^p_{ik}},
\]
so $S$ and $T$ are orthogonal in this norm. Substituting into Lemma~\ref{lem:CL} and using \eqref{eq:QHSC}, the terms $\mathcal T/4$ cancel and we obtain
\begin{equation}\label{eq:HSCtraceexact}
 \Delta_g u=|S|^2+au-\frac c2(u^2+q).
\end{equation}
Since $q\ge u^2/n$ and $c<0$,
\begin{equation}\label{eq:HSCtracebound}
 \Delta_g u\ge\frac{(-c)(n+1)}{2n}\,u(u-n).
\end{equation}
Evaluating at a maximum point proves $u\le n$ throughout $X$. The inequality $uv\ge n^2$ then gives $v\ge n$.

Next, let $F=\log(\omega_g^n/\omega_h^n)$. By \eqref{eq:scalarHSC} and \eqref{eq:KEHSC},
\begin{align}
 \Delta_hF
 &=s_h-\tr_h\Ric_g^{(1)}\notag\\
 &=na+\frac14|T|_h^2-av\notag\\
 &=(-a)(v-n)+\frac14|T|_h^2\ge0.\label{eq:HSCvolume}
\end{align}
The strong maximum principle implies that $F$ is constant. Both nonnegative terms in \eqref{eq:HSCvolume} must vanish, so $T=0$ and $v=n$. From $uv\ge n^2$ and $u\le n$, we obtain $u=n$ and equality in Cauchy--Schwarz. Thus all $\lambda_i=1$, proving $h=g$.

In particular, $h$ is K\"ahler. Its curvature is therefore determined by $H_h=c$:
\[
 R^h_{i\bar j k\bar\ell}
 =\frac c2(h_{i\bar j}h_{k\bar\ell}+h_{i\bar\ell}h_{k\bar j}).
\]
Compactness implies completeness. The simply connected K\"ahler space form classification identifies the universal cover with $\CH^n$ of holomorphic sectional curvature $c$. This completes the proof.
\end{proof}

\begin{proof}[Proof of Corollary~\ref{cor:HSC}]
The negative case is Theorem~\ref{thm:HSC}. For $c=0$, the K\"ahler flatness of $h$ follows from \cite[Corollary 1.2]{BT}. The finite torus cover follows from the compact flat K\"ahler structure theorem.
\end{proof}

\begin{remark}
The argument for Theorem~\ref{thm:HSC} uses the pluriclosed condition in \eqref{eq:pluricurv} and in the canonical-bundle input, and uses the existence of a K\"ahler background to obtain $g$. For $c=0$, the positive coefficient in \eqref{eq:HSCtracebound} disappears, so the estimate $u\le n$ does not follow by setting $c=0$ in this proof. For a general Hermitian metric with $H_h=0$, polarization alone also fails to give \eqref{eq:pairzero}, the identity used in the real bisectional curvature argument. Thus the two proofs do not establish Conjecture~\ref{conj:HSC} without the stated additional assumption.
\end{remark}

\section*{Acknowledgements}
The author is grateful to Professor Fangyang Zheng for constant encouragement and support, and for introducing the author to problems concerning holomorphic sectional curvature and real bisectional curvature.

\section*{Declaration on the use of AI}
The author used ChatGPT Plus as auxiliary tools in this work. The author completed and verified all mathematical arguments and takes full responsibility for the content of this paper.

\end{document}